\documentclass{amsart}
\RequirePackage{amssymb, amsfonts, amsmath, latexsym, verbatim, xspace, setspace, mathrsfs,graphicx,bm,multirow}

\newcommand{\R}{\mathbb{R}}

\newcommand{\Z}{\mathbb{Z}}
\newcommand{\Q}{\mathbb{Q}}
\newcommand{\C}{\mathbb{C}}

\newcommand{\lb}{\left\{}
\newcommand{\rb}{\right\}}
\newcommand{\df}{\displaystyle\frac}

\newtheorem{thm}{Theorem}[section]
\newtheorem{prop}[thm]{Proposition}
\newtheorem{lem}[thm]{Lemma}

\theoremstyle{definition}
\newtheorem{definition}[thm]{Definition}
\newtheorem{example}[thm]{Example}
\newtheorem{question}[thm]{Question}

\title[Salem number stretch factors]{Salem number stretch factors and totally real fields arising from Thurston's construction}
\date{\today}
\parindent 0pt
\begin{document}
\author[J. ~Pankau]{Joshua Pankau}
\address{Department of Mathematics, University of California,Santa Barbara}
\email{jpankau@math.ucsb.edu}

\begin{abstract} 
In 1974, Thurston proved that, up to isotopy, every automorphism of closed orientable surface is either periodic, reducible, or pseudo-Anosov. The latter case has lead to a rich theory with applications ranging from dynamical systems to low dimensional topology. Associated with every pseudo-Anosov map is a real number $\lambda > 1$ known as the \textit{stretch factor}. Thurston showed that every stretch factor is an algebraic unit but it is unknown exactly which units can appear as stretch factors. In this paper we show that every Salem number has a power that is the stretch factor of a pseudo-Anosov map arising from a construction due to Thurston. We also show that every totally real number field $K$ is of the form $K = \Q(\lambda + \lambda^{-1})$, where $\lambda$ is the stretch factor of a pseudo-Anosov map arising from Thurston's construction.
\end{abstract}

\maketitle
\section{Introduction}

A homeomorphism $\phi$ from a closed orientable surface $S_g$ to itself is called \textit{pseudo-Anosov} if there is a pair of transverse measured folations $\mathcal{F}_s$ and $\mathcal{F}_u$ of $S_g$ in which $\phi$ stretches $\mathcal{F}_u$ by a real number $\lambda > 1$, and contracts $\mathcal{F}_s$ by a factor of $\lambda^{-1}$. The number $\lambda$ is known as the \textit{stretch factor} of $\phi$. Thurston showed in \cite{MR956596} that the stretch factor of any pseudo-Anosov map is an algebraic unit whose degree over $\Q$ is bounded above by $6g-6$. It is an open problem as to exactly which algebraic units appear as stretch factors of pseudo-Anosov maps.

\vspace{1em}
There are several well known constructions of pseudo-Anosov maps. Thurston provided a construction, known as \textit{Thurston's construction}, which describes pseudo-Anosov maps as products of Dehn twists around simple closed curve that divide the surface into disks. We will give an overview of this construction in the next section but for a more complete treatment see either \cite{FLP}, Expos\'e 13, or  \cite{MR2850125}. In \cite{MR930079}, Penner describes another construction involving products of Dehn twists. These constructions can produce the same pseudo-Anosov map but there are pseudo-Anosov maps that arise from one and not the other. In a recent paper \cite{MR3447112}, Shin and Strenner showed that if $\lambda$ is a stretch factor of a pseudo-Anosov map coming from Penner's construction then $\lambda$ does not have Galois conjugates on the unit circle. Whereas, Veech showed that if $\lambda$ is a stretch factor coming from Thurston's construction then $\Q(\lambda + \lambda^{-1})$ is a totally real number field, for which we provide a proof in the next section.

\vspace{1em}
In this paper we will focus on Thurston's construction, and a certain type of algebraic unit known as a \textit{Salem number}. Salem numbers have complex conjugates on the unit circle so they cannot arise as stretch factors from Penner's construction. On the other hand, there are Salem numbers that appear as stretch factors from Thurston's construction, and many of the smallest known stretch factors are Salem numbers. A natural question to ask is if we can get every Salem number as a stretch factor. We prove the following:

\vspace{1em}
\textbf{Theorem A.} \textit{For any Salem number $\lambda$, there are positive integers $k$ and $g$ such that $\lambda^k$ is the stretch factor of a pseudo-Anosov homeomorphism of $S_g$ arising from Thurston's construction, where $g$ depends only on the degree of $\lambda$ over $\Q$.}

\vspace{1em}
Since we know that $\Q(\lambda + \lambda^{-1})$ is a totally real number field when $\lambda$ is a stretch factor from Thurston's construction, it is natural to ask which totally real number fields arise this way. We also prove:

\vspace{1em}
\textbf{Theorem B.} \textit{Every totally real number field is of the form $K = \Q(\lambda + \lambda^{-1})$, where $\lambda$ is the stretch factor of a pseudo-Anosov map arising from Thurston's construction.}

\vspace{1em}
The structure of the paper is as follows: In section 2 we discuss Thurston's constructions, highlighting the information that will be relevant to results of the paper. In section 3 we discuss a new way of constructing a closed orientable surface from a nonsingular positive integer matrix. This will allow us to convert our algebraic results back into topological information, which will be important for proving both Theorem A and B. In section 4 we define Salem numbers and discuss their relation to Thurston's construction, then prove Theorem A in section 5. In sections 6 and 7 we develop some algebraic number theory and conclude by proving Theorem B.

\vspace{1em}
\textbf{Acknowledgements.} I would like to thank my advisor Darren Long for many helpful conversations and advice given as these ideas took shape. I would also like to thank Michael Dougherty and David Wen for listening to me talk at length about these results. Last, but not least, I would like to thank my wife Jacqueline for being my emotional support and for teaching me the basics of Adobe Illustrator so that I could make the graphics.

\section{Thurston's Construction}

In this section we give an overview of Thurston's construction of pseudo-Anosov maps. Our goal is to provide the basic framework of the construction and establish some of the key ideas that we will use in this paper. For a more in depth discussion see \cite{FLP}. We also provide a proof of the result from Veech that if $\lambda$ is a stretch factor of a pseudo-Anosov map arising from this construction then $\Q(\lambda + \lambda^{-1})$ is a totally real number field. We begin with the following definitions:

\begin{definition}
A \textit{multicurve} $C = \lb C_1, C_2,...,C_n\rb$ on a surface $S$ is a union of finitely many disjoint simple closed curves on $S$. For the purposes of this paper we will also require that each $C_i$ is \textit{essential}, i.e., not isotopic to a point, and no two $C_i$'s are parallel, i.e., not isotopic.
\end{definition}

\begin{definition}
Let $C = \lb C_1, ..., C_n \rb$ and $D = \lb D_1,...,D_m\rb$ be a pair of multicurves on a closed orientable surface $S_g$. We say that $C \cup D$ \textit{fills} $S_g$ if every component of $S_g - (C \cup D)$ is a disk. We also say that $C$ and $D$ are \textit{tight} if $i(C_i,D_j) = |C_i \cap D_j|$, where $i(\cdot,\cdot)$ denotes the geometric intersection number.
\end{definition}

We will denote the Dehn twist about a simple closed curve $\alpha$ by $T_{\alpha}$, with the convention that we are twisting to the right with respect to the orientation of $S_g$. We now give an overview of Thurston's construction following the discussion given in \cite{Long}.

\begin{thm}[Thurston's Construction] Suppose that $C = \lb C_1,...,C_n \rb$, $D = \lb D_1, ..., D_m\rb$ are tight, filling multicurves on a closed, orientable surface $S_g$. To each $C_i \in C$ assign an integer $n_i > 0$ and to each $D_j \in D$ assign an integer $m_j > 0$. Then the maps
	\[
		\begin{array}{lcr}
				T_C = \displaystyle\prod_{i}T_{{C}_i}^{n_i} \hspace{.2in} & \textit{and} & \hspace{.2in} T_D = \displaystyle\prod_{j}T_{{D}_j}^{m_j}
			\end{array}
	\]
can be represented by matrices in PSL$(2;\R)$ and any word $\theta = w(T_C,T_D)$ which corresponds to a hyperbolic class $[\theta] \in \text{SL}(2;\R)$ is pseudo-Anosov with stretch factor the larger of the two eigenvalues.
\end{thm}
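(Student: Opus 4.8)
The plan is to realise $S_g$ as a singular Euclidean surface on which $C$ is the union of the core curves of the horizontal cylinders and $D$ the union of the core curves of the vertical cylinders, and then to recognise $T_C$ and $T_D$ as affine automorphisms of this structure whose derivatives are parabolic; the pseudo-Anosov conclusion then follows from the standard dictionary between affine automorphisms with hyperbolic derivative and pseudo-Anosov maps.

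First I would build the flat structure. Let $N=(N_{ij})$ be the $n\times m$ intersection matrix, $N_{ij}=i(C_i,D_j)$, which by tightness equals $|C_i\cap D_j|$. Because $C\cup D$ fills $S_g$, its bipartite intersection graph is connected, so $NN^T$ — and hence also $D_nND_mN^T$, where $D_n=\mathrm{diag}(n_1,\dots,n_n)$ and $D_m=\mathrm{diag}(m_1,\dots,m_m)$ — is an irreducible nonnegative matrix, and $N$ has no zero row or column. Perron--Frobenius then provides a positive eigenvector $h=(h_i)$ of $D_nND_mN^T$ whose eigenvalue is $\mu^2:=\rho(D_nND_mN^T)>0$; set $v:=\mu^{-1}D_mN^Th$, which is again positive. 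Now assemble $S_g$ out of Euclidean rectangles, one rectangle $R_p$ of height $h_i$ and width $v_j$ for each $p\in C_i\cap D_j$, glued along the pattern in which the strands of $C$ and $D$ cross in $S_g$ (this is where tightness is used: with $|C_i\cap D_j|=i(C_i,D_j)$ there are no bigons, so this combinatorial model genuinely reproduces $S_g$, and filling guarantees that the resulting horizontal and vertical curves cut $S_g$ into disks, i.e. that we obtain a closed surface of the correct type). The outcome is a singular flat structure in which each $C_i$ is the core of a horizontal cylinder of circumference $\sum_jN_{ij}v_j$ and height $h_i$, and each $D_j$ the core of a vertical cylinder of circumference $\sum_iN_{ij}h_i$ and width $v_j$; and the identities $D_nNv=\mu h$, $D_mN^Th=\mu v$ say precisely that $n_i$ times the reciprocal modulus of the $i$-th horizontal cylinder equals the constant $\mu$, and symmetrically for the vertical cylinders.

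Consequently $T_C=\prod_iT_{C_i}^{n_i}$ is an affine automorphism of this structure: the closed horizontal cylinders tile $S_g$, on the $i$-th of them $T_C$ is the $n_i$-fold shear, and the identity just displayed makes all these shears equal to the single parabolic $P_C=\left(\begin{smallmatrix}1&\mu\\0&1\end{smallmatrix}\right)$; likewise $T_D$ is affine with derivative $P_D=\left(\begin{smallmatrix}1&0\\-\mu&1\end{smallmatrix}\right)$ (the sign depending on the twisting convention). Since the flat structure is only a half-translation structure, the derivative of an affine map is defined up to sign, so $\theta\mapsto D\theta$ is a homomorphism $\langle T_C,T_D\rangle\to\mathrm{PSL}(2;\R)$ carrying $\theta=w(T_C,T_D)$ to the class of $w(P_C,P_D)$ — the representation claimed in the statement. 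For the final assertion, suppose $[\theta]$ is hyperbolic; lift it to $\pm A\in\mathrm{SL}(2;\R)$ with eigenvalues $\kappa^{\pm1}$, $\kappa>1$ (neither hyperbolicity nor the value of $\kappa$ depends on the lift). Conjugating the flat structure by a matrix diagonalising $A$ turns $\theta$ into an affine automorphism with derivative $\mathrm{diag}(\kappa,\kappa^{-1})$, so $\theta$ multiplies the horizontal measured foliation of the new structure by $\kappa$ and its transverse vertical companion by $\kappa^{-1}$ — exactly the data of a pseudo-Anosov map with stretch factor the larger eigenvalue $\kappa$. This closing step is entirely standard and I would simply cite \cite{FLP}, Expos\'e 13, and \cite{MR2850125}, Ch.~14.

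The main obstacle is the construction in the second paragraph: producing a single singular Euclidean structure that makes $T_C$ \emph{and} $T_D$ affine with constant (parabolic) derivatives. Both hypotheses are doing real work there — filling supplies the irreducibility needed for a strictly positive Perron--Frobenius eigenvector, which is what calibrates the cylinder moduli against the prescribed weights $n_i,m_j$, while tightness is what makes the rectangle-gluing model faithful, so that the annular neighbourhoods of the $C_i$ are embedded cylinders whose union is all of $S_g$ and $T_C$ is globally, not just piecewise, affine. Everything after that is bookkeeping with $2\times2$ matrices together with the cited correspondence.
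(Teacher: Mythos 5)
Your proposal is correct and follows essentially the same route as the paper: build the branched flat structure from rectangles at the intersection points, calibrate heights and widths by the Perron--Frobenius eigenvector of the weighted intersection matrix so that $T_C$ and $T_D$ become affine with parabolic derivatives, and then invoke the standard affine-automorphism/pseudo-Anosov dictionary (FLP, Expos\'e 13) for hyperbolic words. The only cosmetic difference is normalization: you keep both shears equal to $\mu$, while the paper rescales so the representing matrices are $\left[\begin{smallmatrix}1&1\\0&1\end{smallmatrix}\right]$ and $\left[\begin{smallmatrix}1&0\\-\nu&1\end{smallmatrix}\right]$ with $\nu=\mu^2$, which is the same representation up to conjugation by a diagonal matrix.
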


\vspace{1em}
The first step in this construction is to give a branched flat structure to $S_g$, that is, a way to view the surface as a flat manifold except at a finite number of points. We do this by describing a way of decomposing the surface into glued up rectangles. Let $C = \lb C_1,...,C_n\rb$ and $D = \lb D_1,...,D_m\rb$ be tight, filling multicurves on $S_g$. $C\cup D$ defines a cell structure on $S_g$ since the complement of $C \cup D$ is a union of disks. We define a dual cell structure of $C \cup D$ where we take a co-vertex for each cell, a co-edge for each edge, and a co-cell for each vertex. In a neighborhood of each vertex, we see four segments originating from the vertex so each co-cell will have four sides. We can think of the dual cell structure as placing rectangles on each vertex and then identify sides of the rectangles that have an arc of some $C_i$ or $D_j$ between them.

\begin{figure}[ht]
		\centering
	  \includegraphics[scale=.3]{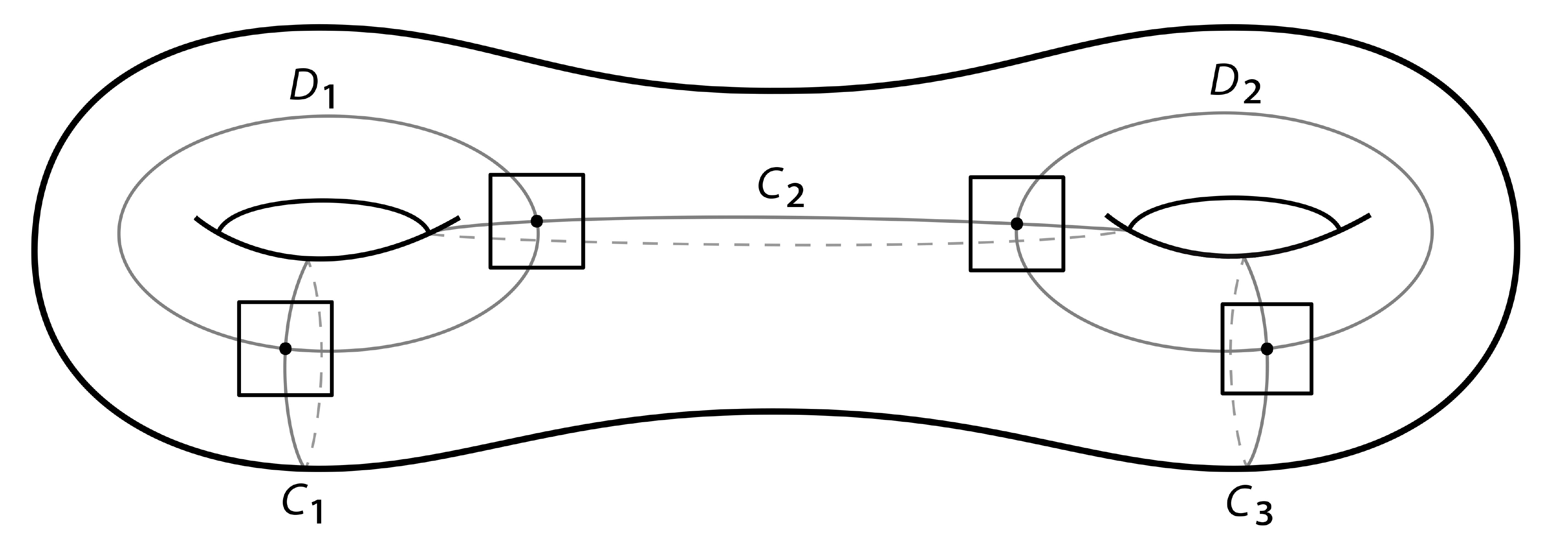}
		\caption{A pair of tight, filling multicurves on $S_2$ with co-cells at each vertex.}
	\end{figure}

Assigning lengths to sides of the rectangles allows us to identify each co-cell with a rectangle in $\R^2$. So we can view $S_g$ as a union of these metric rectangles, hence we have given $S_g$ a branched flat structure.

\begin{figure}[ht]
		\centering
	  \includegraphics[scale=.3]{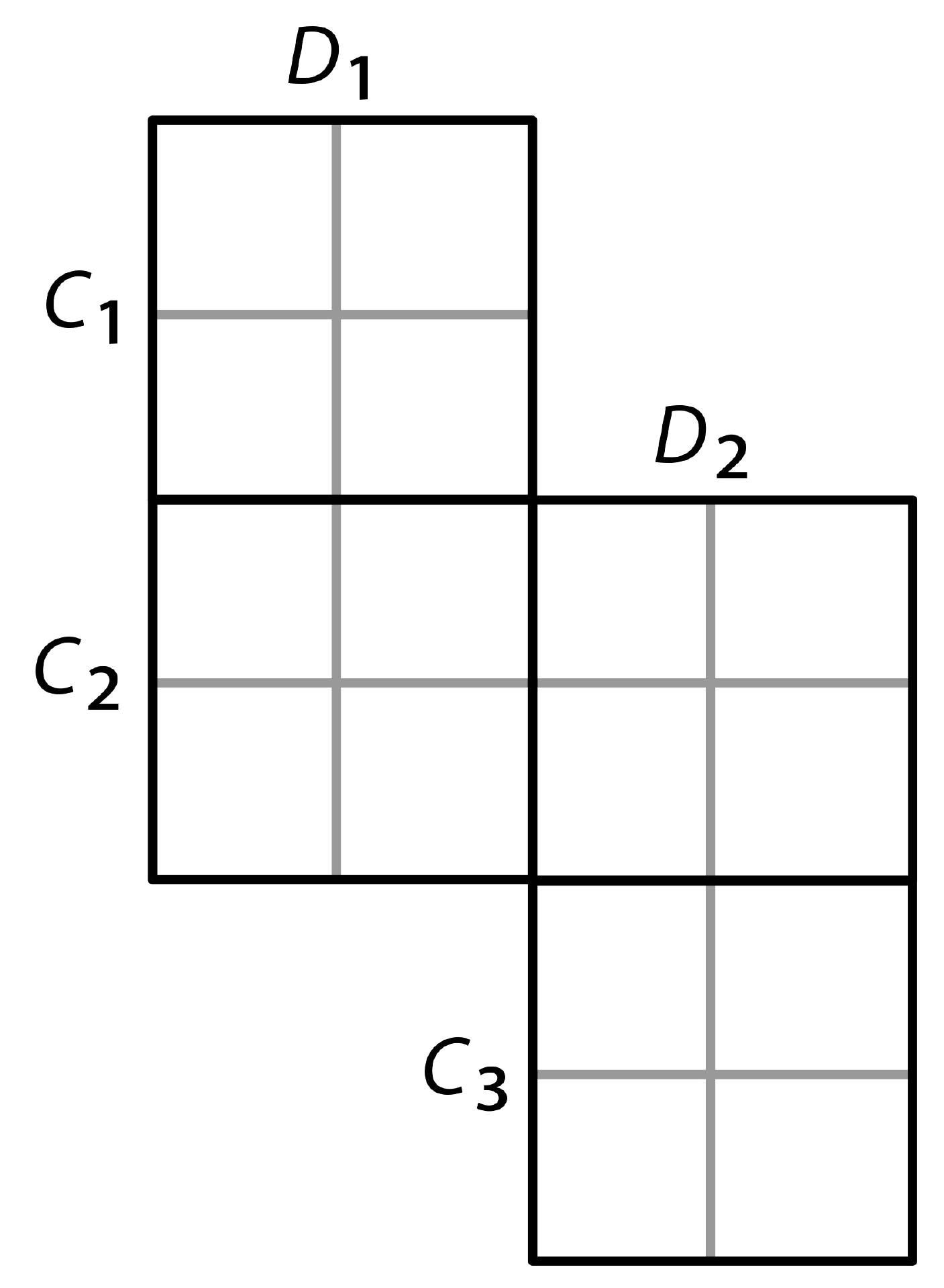}
		\caption{$S_2$ as a union of rectangles.}
	\end{figure}
	
\newpage
	
There is quite a bit of choice for the length of the sides of each rectangle and this freedom can be used to show the following:

\begin{prop}Assign a positive integer $n_i$ to each $C_i \in C$ and a positive integer $m_j$ to $D_j$. Then there is a branched flat structure on $S_g$ for which both the maps $$T_C = \displaystyle\prod_{i}T_{C_i}^{n_i} \hspace{.1in} \textit{and} \hspace{.1in} T_D = \displaystyle\prod_{j}T_{D_j}^{m_j}$$ are affine.
\end{prop}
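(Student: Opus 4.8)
The plan is to realize the desired branched flat structure concretely as a union of Euclidean rectangles --- one for each vertex of the cell structure $C\cup D$, i.e.\ for each co-cell of the dual complex described above --- with the curves in $C$ running horizontally and those in $D$ running vertically, and then to choose the side lengths so that the two multi-twists become global shears. This is essentially the argument of Thurston in \cite{FLP}, Expos\'e 13, organized around the intersection matrix.

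First I would parametrize the possible flat structures. To the co-cell $R_v$ based at an intersection point $v\in C_i\cap D_j$ assign a width $w_v$ (its extent in the horizontal, $C$-direction) and a height $h_v$. The gluing pattern forces constraints: consecutive co-cells along $C_i$ are identified along vertical edges, so all co-cells met by $C_i$ must share a common height $a_i$; likewise all co-cells met by $D_j$ share a common width $b_j$. Thus a candidate flat structure is determined by positive numbers $a_1,\dots,a_n$ and $b_1,\dots,b_m$, the co-cell at $v\in C_i\cap D_j$ being the rectangle of size $b_j\times a_i$. All identifications are by translations, so the only singularities of the resulting metric occur at the images of the rectangle corners; these correspond to the complementary disks of $C\cup D$, and each such disk is a $2k$-gon whose boundary alternates between arcs of $C$ and of $D$ with $k\ge 2$ (here we use that $C$ and $D$ are tight, so there are no bigons), giving a cone angle $k\pi$. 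Hence \emph{every} choice of positive $a_i,b_j$ yields a legitimate branched flat structure on $S_g$.

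Next I would compute when $T_C$ and $T_D$ are affine. In this metric an annular neighborhood of $C_i$ is a Euclidean cylinder of circumference $\sum_j i(C_i,D_j)\,b_j$ and height $a_i$, on which $T_{C_i}^{n_i}$ is realized by a horizontal shear; for the product $T_C$ to admit an affine representative with derivative $\left(\begin{smallmatrix}1&\mu\\0&1\end{smallmatrix}\right)$ one needs $\mu\, a_i=n_i\sum_j i(C_i,D_j)\,b_j$ for every $i$, with $\mu$ independent of $i$. Symmetrically, $T_D$ is affine with derivative $\left(\begin{smallmatrix}1&0\\ \nu&1\end{smallmatrix}\right)$ exactly when $\nu\, b_j=m_j\sum_i i(C_i,D_j)\,a_i$ for every $j$, with $\nu$ independent of $j$. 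Writing $N=(i(C_i,D_j))$ and eliminating the $a_i$ from the first system, the two systems together are equivalent to requiring that $(b_j)$ be a positive eigenvector, with positive eigenvalue $\mu\nu$, of the nonnegative $m\times m$ matrix $P$ with $P_{jk}=m_j\sum_i n_i\,N_{ij}N_{ik}$; one then recovers $a_i=\mu^{-1}n_i\sum_j N_{ij}b_j$, which is positive and makes the second system hold automatically.

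Finally I would invoke Perron--Frobenius. Because $C\cup D$ fills $S_g$, the bipartite graph on $C\sqcup D$ recording intersections is connected, and this is precisely what forces $P$ to be an irreducible nonnegative matrix (it has positive diagonal, and $P_{jk}>0$ whenever $D_j$ and $D_k$ both cross a common $C_i$). This is where the filling hypothesis is essential, and I expect it to be the main point requiring care, since one must correctly extract irreducibility from filling and then strict positivity of the eigenvector from irreducibility. Perron--Frobenius then supplies a strictly positive eigenvector $(b_j)$ for the positive spectral radius $\rho(P)$; taking $\mu=\nu=\sqrt{\rho(P)}$ and defining the $a_i$ as above gives positive side lengths realizing a branched flat structure in which, by the preceding paragraph, both $T_C$ and $T_D$ are affine. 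The remaining checks --- that the shears with the stated derivatives genuinely represent the mapping classes $T_C$ and $T_D$, and that assembling the per-annulus twists globalizes consistently --- are routine once the cylinder moduli have been matched, so I would only sketch them.
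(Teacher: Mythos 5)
Your proposal is correct and follows essentially the same route as the paper: decompose $S_g$ into rectangles dual to the intersection points with constant heights along each $C_i$ and constant widths along each $D_j$, translate affineness of $T_C$ and $T_D$ into the two linear systems, and solve them with a Perron--Frobenius eigenvector of the matrix $MQNQ^T$ (your $P$). The only difference is that you spell out details the paper leaves implicit, notably the cone-angle check and the derivation of irreducibility of $P$ from the filling hypothesis.
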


\vspace{1em}
By \textit{affine} we mean that lines in the branched flat structure are mapped to lines. The Dehn twists about the $C_i$ and $D_j$ act on the branched flat structure by skewing the rectangles, so we wish to find a choice for side lengths so that the slope of the sides after twisting $n_i$ times about $C_i$ is constant in $i$. Similarly, twisting $m_j$ times about $D_j$ is constant in $j$. We require that the height of each rectangle lying along $C_i$ to have constant height $h_i$ and that each rectangle lying along $D_j$ has constant width $\ell_j$. If we let $i(D_r,C_s) = Q_{rs}$ then it can be shown that $T_C$ is affine if
	\[\tan(\theta) = \df{h_i}{n_i\displaystyle\sum_{r}\ell_rQ_{ri}} \hspace{.2in} \text{for all} \ i,\]
where $\theta$ is the angle between the image of the vertical edges along $C_i$ with the horizontal edges along $C_i$ after skewing by $T_C$. Similarly, the condition that $T_D$ is affine is 
	\[\tan(\phi) = \df{\ell_j}{m_j\displaystyle\sum_{s}h_sQ_{sj}} \hspace{.2in} \text{for all} \ j.\]

It can be shown that the choice of $\ell_j$ are from a Perron-Frobenius eigenvector with eigenvalue $\nu$ of the matrix $MQNQ^T$ where $Q_{ij} = i(D_i,C_j)$ and $M$ and $N$ are diagonal matrices whose diagonal entries are $m_j$ and $n_i$, respectively. The $h_i$ come from a Perron-Frobenius eigenvector, also with eigenvalue $\nu$, of the matrix $QNQ^TM$. These give dimensions of the rectangles in which $T_C$ and $T_D$ are affine. With our chosen convention of twisting, these lengths give the following representations (after some rescaling of the $\ell_i$):
\begin{align} 
	[T_C] = \begin{bmatrix} 1 & 1 \\ 0 & 1\end{bmatrix} \hspace{.2in} \text{and} \hspace{.2in} [T_D] = \begin{bmatrix} 1 & 0 \\ -\nu & 1\end{bmatrix}
	\end{align}
which are matrices in $PSL(2;\R)$. Now any word $\phi = w(T_C,T_D)$ such that $[\phi] \in SL(2;\R)$ is hyperbolic ($|\text{tr($[\phi]$)}| > 2$) will be pseudo-Anosov. Since $[\phi]$ is hyperbolic it has two real eigenvalues, $\lambda$ and $\lambda^{-1}$, the stretch factor of $\phi$ is $|\lambda|$, and all lines parallel to the eigenspaces descend to transverse measured foliations on $S_g$. We end this section by proving the following:

\begin{thm}[Veech] If $\lambda$ is the stretch factor of a pseudo-Anosov map arising from Thurston's construction then $\Q(\lambda + \lambda^{-1})$ is a totally real number field.
\end{thm}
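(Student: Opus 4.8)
The plan is to follow the arithmetic through the matrix representation $(1)$ and reduce everything to a statement about the Perron--Frobenius eigenvalue $\nu$. Let $\lambda>1$ be the stretch factor and write $[\phi]=w([T_C],[T_D])\in \mathrm{SL}(2;\R)$ for the associated hyperbolic matrix, whose two real eigenvalues have absolute values $\lambda$ and $\lambda^{-1}$; thus $\lambda+\lambda^{-1}=|\mathrm{tr}([\phi])|$. From the explicit matrices in $(1)$ the four matrices $[T_C]^{\pm1}$ and $[T_D]^{\pm1}$ all have entries in $\Z[\nu]$, so any word in them does too, and hence $\mathrm{tr}([\phi])\in\Z[\nu]$. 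Since $\Z[\nu]$ is closed under negation, $\lambda+\lambda^{-1}\in\Z[\nu]\subseteq\Q(\nu)$, so $\Q(\lambda+\lambda^{-1})$ is a subfield of $\Q(\nu)$.

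It therefore suffices to show that $\Q(\nu)$ is totally real, because every subfield of a totally real number field is again totally real. Recall that $\nu$ is an eigenvalue of the integer matrix $A=MQNQ^T$, where $M$ and $N$ are diagonal with positive integer entries and $Q$ is the integer intersection matrix with $Q_{ij}=i(D_i,C_j)$. Let $M^{1/2}$ be the diagonal matrix of positive square roots of the diagonal entries of $M$. Then
\[
M^{-1/2} A\, M^{1/2} \;=\; M^{1/2}\big(QNQ^{T}\big)M^{1/2},
\]
and the matrix on the right is real symmetric: $QNQ^T$ is symmetric because $N$ is diagonal, and conjugating a symmetric matrix on both sides by the same diagonal matrix preserves symmetry. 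Hence $A$ is conjugate over $\R$ to a real symmetric matrix, so every root of its characteristic polynomial $\chi_A$ is real. Since $\chi_A\in\Z[x]$ and the minimal polynomial of $\nu$ over $\Q$ divides $\chi_A$ in $\Q[x]$, every Galois conjugate of $\nu$ is one of the real roots of $\chi_A$. Thus $\Q(\nu)$ is totally real and the theorem follows.

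The only step with real content is the middle one. It is immediate from Perron--Frobenius that $\nu$ itself is real, but we need all of its conjugates to be real, i.e.\ that the integer polynomial $\chi_A$ splits over $\R$; this is exactly what the symmetrization $A\mapsto M^{1/2}QNQ^TM^{1/2}$ delivers, the asymmetry introduced by the diagonal factor $M$ being removed by the real change of basis $M^{1/2}$. One should be slightly careful to phrase the conclusion as ``$\chi_A$ has only real roots, counted with multiplicity'' rather than merely ``$A$ has real eigenvalues,'' but otherwise the argument is routine, as are the two surrounding observations (words in $[T_C]^{\pm1},[T_D]^{\pm1}$ have $\Z[\nu]$-entries, and subfields of totally real fields are totally real).
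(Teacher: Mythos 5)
Your proposal is correct and follows essentially the paper's own route: the heart of both arguments is that $MQNQ^T$ is similar via $M^{1/2}$ to the real symmetric matrix $M^{1/2}QNQ^TM^{1/2}$, so all roots of its integer characteristic polynomial, hence all Galois conjugates of $\nu$, are real, and then $\lambda+\lambda^{-1}\in\Q(\nu)$ forces $\Q(\lambda+\lambda^{-1})$ to be totally real. The only differences are cosmetic: the paper reaches the same symmetric matrix through the block matrix $\Lambda$ (thereby also getting nonnegativity, which is not needed), while you conjugate directly, and you spell out the trace/$\Z[\nu]$-entries argument that the paper leaves implicit.
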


\begin{proof} Let $C$ and $D$ be tight, filling multicurves on a closed orientable surface, and let $M$, $N$ and $Q$ be nonnegative integer matrices defined as above.  Consider the matrix

	\[\Lambda = \begin{bmatrix}
									\mathbf{0} & M^{1/2}QN^{1/2}\\
									N^{1/2}Q^TM^{1/2} & \mathbf{0}
								\end{bmatrix}\]

$\Lambda$ is symmetric so it has real eigenvalues which means
	\[\Lambda^2 = \begin{bmatrix}
									M^{1/2}QNQ^TM^{1/2} & \mathbf{0}\\
									\mathbf{0}          & N^{1/2}Q^TMQN^{1/2}
								\end{bmatrix}\]	
has nonnegative eigenvalues, hence $M^{1/2}QNQ^TM^{1/2}$ has nonnegative eigenvalues. Conjugating this matrix by $M^{1/2}$ gives $MQNQ^T$, thus $MQNQ^T$ has nonnegative eigenvalues. This tells us that $\nu$ is totally nonnegative, so $\Q(\nu)$ is totally real. Now if $\lambda$ is a stretch factor arising from the above construction then $\lambda$ is a root of degree 2 polynomial in $\Q(\nu)[x]$ of the form
	\[ x^2 -(\lambda + \lambda^{-1})x + 1\]
so $\lambda + \lambda^{-1} \in \Q(\nu)$, and therefore $\Q(\lambda + \lambda^{-1})$ is also totally real. 
\end{proof}

\section{Constructing Surfaces from Positive Integer Matrices.}

In this section we describe a new construction in which we build a closed orientable surface $S$ from a nonsingular positive integer matrix $Q$, where on $S$ are two tight, filling multicurves $C$ and $D$ whose intersection matrix is $Q$. This construction serves as the central link between the algebraic results of this paper and topological ones. We will need it prove both Theorem A and Theorem B so we state and prove it here.

\begin{thm}
Given a nonsingular positive square integer matrix $Q$, there is a closed orientable surface $S$ with tight, filling multicurves $C =\lb C_1,...,C_n\rb$ and $D = \lb D_1,...,D_n \rb$ such that $Q_{ij} = i(D_i,C_j)$.
\end{thm}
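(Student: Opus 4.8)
The plan is to produce $S$ by exactly the combinatorics that already underlies Thurston's construction, but now built directly out of $Q$: cut $S$ into one Euclidean square for each prospective point of $C\cap D$, and glue these squares so that their horizontal core arcs assemble into the curves $C_j$ and their vertical core arcs into the curves $D_i$. So I would take a square $R_{ijk}$ for every triple $(i,j,k)$ with $1\le i,j\le n$ and $1\le k\le Q_{ij}$ --- there are $\sum_{i,j}Q_{ij}$ of them --- each carrying a distinguished horizontal direction, reserved for $C$, and vertical direction, reserved for $D$.

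First I would assemble the bands. For each $j$, take the $r_j:=\sum_i Q_{ij}$ squares $R_{ijk}$ with middle index $j$, arrange them in an (arbitrary) cyclic order, and glue the right edge of each to the left edge of the next by a translation; this yields an annulus $A_j$, whose core circle I call $C_j$. Symmetrically, for each $i$ glue the $c_i:=\sum_j Q_{ij}$ squares with first index $i$ along top and bottom edges, by translations, around a cyclic order, obtaining an annulus $B_i$ with core circle $D_i$. Each edge of each square is then used exactly once (its two vertical edges inside some $A_j$, its two horizontal edges inside some $B_i$), so the quotient $S$ is a closed surface; all identifications are translations, so $S$ is orientable, and since $Q$ has no zero entry every $A_j$ meets every $B_i$, so $S$ is connected. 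By construction $C=\{C_1,\dots,C_n\}$ and $D=\{D_1,\dots,D_n\}$ are families of pairwise disjoint embedded circles, and $C_j$ meets $D_i$ transversely exactly at the $Q_{ij}$ centres of the squares $R_{ij1},\dots,R_{ijQ_{ij}}$, so $|C_j\cap D_i|=Q_{ij}$.

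It remains to promote these counts to geometric intersection numbers and to verify the multicurve axioms. The components of $S\setminus(C\cup D)$ are open disks, one around each vertex of the square complex, so $C\cup D$ fills $S$. The one step requiring real work is to show that none of these disks is a bigon: its boundary alternates between arcs of $C$ and arcs of $D$, and since the squares are glued only by translations the edges of the complex incident to any vertex alternate between vertical edges and horizontal edges, so each vertex is surrounded by an even number of square-corners, and this number cannot equal $2$ --- that would force two squares to be identified along two edges sharing a corner, which no translation gluing can do. Hence every complementary disk has at least four sides. (Equivalently, the translation structure gives $S$ a flat metric whose cone points have non-positive curvature and in which $C\cup D$ is geodesic, and Gauss--Bonnet rules out a bigon.) By the bigon criterion, $C$ and $D$ are then in minimal position, so $i(D_i,C_j)=|D_i\cap C_j|=Q_{ij}$; since every $Q_{ij}\ge 1$, each $C_j$ and each $D_i$ has positive geometric intersection with another curve and so is essential, which is tightness. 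Finally, $Q$ being nonsingular its rows and columns are distinct, so no two $C_j$, and no two $D_i$, are isotopic; thus $C$ and $D$ are the required multicurves. I expect the no-bigon verification to be the only delicate point --- everything else is bookkeeping about how the squares fit together.
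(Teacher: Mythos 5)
Your construction is essentially the paper's own: the paper assembles the same square-tiled surface by taking, for each $j$, a strip of $\sum_i Q_{ij}$ rectangles whose core circle is $C_j$ and threading the $D_i$ through the strips, and the Remark following Theorem 3.1 explicitly allows the arbitrary cyclic orderings you permit (the paper's particular ordering is chosen only so that the genus can be computed). Your arguments for orientability, connectedness, filling via the dual cells, and non-parallelism from the distinctness of the rows and columns of a nonsingular $Q$ all match the paper. The one step to repair is exactly the one you flag as delicate: showing that every complementary disk of $S\setminus(C\cup D)$ has at least four sides rules out bigon \emph{faces} of the full union, but the bigon criterion for the pair $(D_i,C_j)$ requires that $D_i$ and $C_j$ bound no bigon at all, and an innermost such bigon may have arcs of the other curves of $C\cup D$ running through its interior, so it need not be a complementary face; your vertex-parity count does not exclude it without an additional innermost-bigon argument. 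Your parenthetical flat-geometry remark is not merely ``equivalent'' to the face count --- it is what actually closes this gap: a bigon bounded by an arc of $C_j$ and an arc of $D_i$ would have geodesic sides (the core circles miss the cone points) and two corners of interior angle $\pi/2$ or $3\pi/2$ at regular points, which contradicts Gauss--Bonnet on a surface with cone angles $\geq 2\pi$, regardless of what crosses the bigon's interior. Alternatively, and this is the paper's route, since all gluings are translations every intersection of $D_i$ with $C_j$ has the same sign, so the algebraic and geometric intersection numbers agree and $i(D_i,C_j)=|D_i\cap C_j|=Q_{ij}$ with no bigon analysis at all. With either of these substituted for the face-count justification, your proof is complete and coincides in substance with the paper's.
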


\begin{proof} We start by taking $n$ rectangular strips where the $j$th strip is divided into $\displaystyle\sum_{i =1}^{n}Q_{ij}$ rectangles each oriented clockwise. Since the matrix $QQ^T$ is positive then it has a positive eigenvector with entries $\ell_i$. All vertical edges along the $j$th strip will have length $\ell_j$, where as $\ell_1$ will be the width of the first $Q_{1j}$ rectangles, and $\ell_2$ will be the width of the next $Q_{2j}$ rectangles, etc. We now take the central curve of each strip and call it $C_j$ for $j = 1,...,n.$

\vspace{1em}
Now we construct curves $D_i$ as follows: For a fixed $i \in \lb 1,...,n \rb$ there are $Q_{ij}$ rectangles of size $\ell_i \times \ell_j$ lying along the $j$th strip, and we imagine a curve that passes from the top of the left most $\ell_i \times \ell_j$ rectangle through the bottom, then wraps back around the $j$th strip and passes through the next $\ell_i \times \ell_j$ rectangle and continues this way until it reaches the right most $\ell_i \times \ell_j$ rectangle. The curve then continues to the $j+1$st strip and wraps around each $\ell_i \times \ell_{j+1}$ rectangle in similar fashion. After the curve wraps around each strip it closes up, so we have a simple closed curve which we call $D_i$. This gives us gluing instructions where we glue two edges, matching orientation, if they are connected by an arc of some $D_i$. We also identify the two vertical sides of each strip.
\begin{figure}[h]
		\centering
	  \includegraphics[scale=.25]{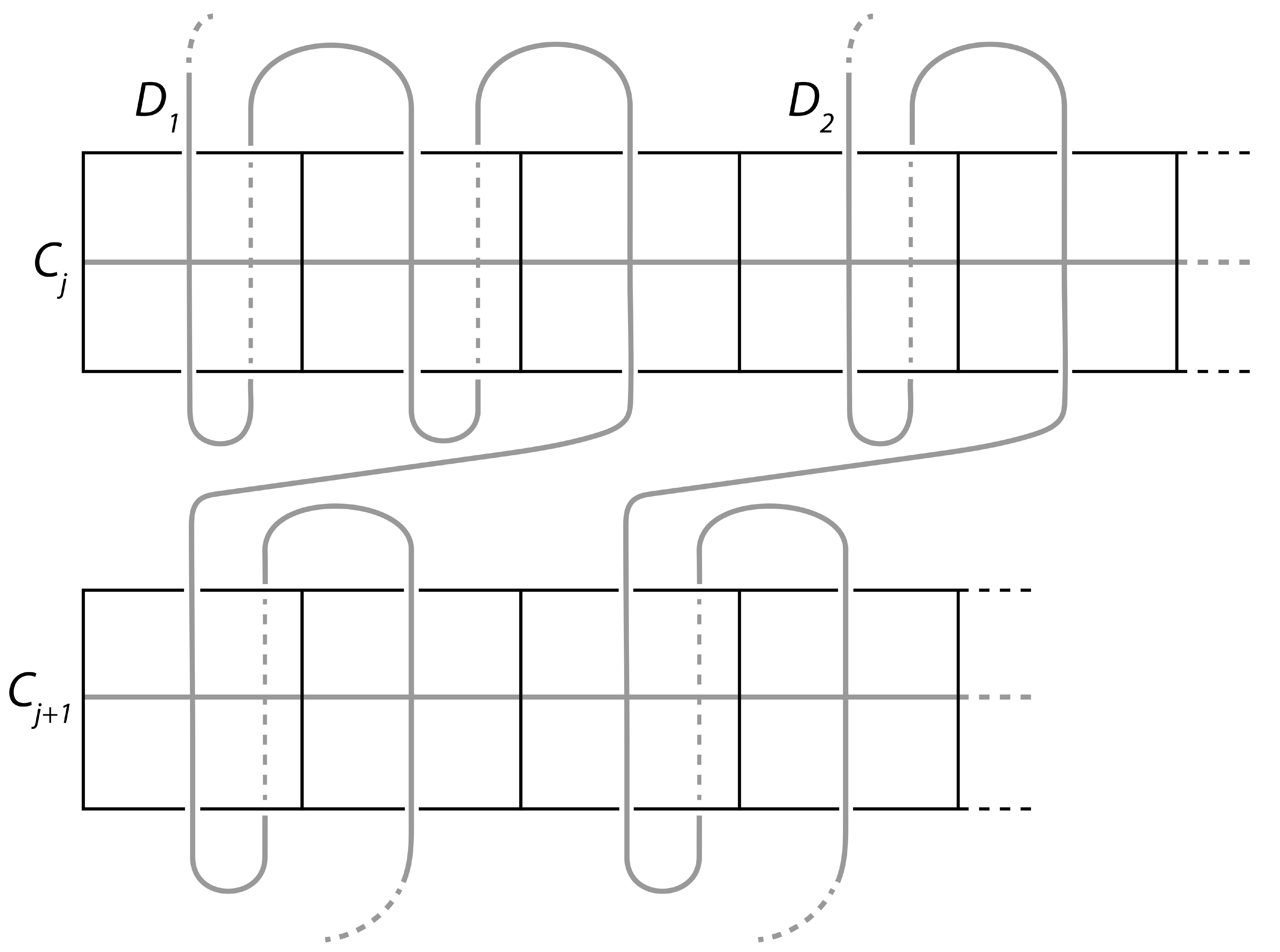}
		\caption{A piece of the cell structure. Edges of different cells are identified if there is an arc of some $D_i$ between them.}
	\end{figure}

Since we glue these strips together where each edge is identified with another, always matching orientation, then we get a closed orientable surface $S$. The condition that the matrix be nonsingular ensures that no two rows are the same, which means that none of the $D_i$ intersect a $C_j$ in exactly the same way, so no two components of $C$ or no two components of $D$ are parallel. By construction, every intersection of $D_i$ with $C_j$ has the same sign so the geometric and algebraic intersection numbers of  these curves are the same. This ensures that no $D_i$ forms a bigon with any $C_j$, hence the multicurves $C = \lb C_1,...,C_n\rb$ and $D = \lb D_1,...,D_n\rb$ are tight. Finally, taking the dual cell structure gives a cell structure of $S$ whose vertices are the points of intersection between each $D_i$ and $C_j$, and whose edges are arcs of some $D_i$ or $C_j$. Hence, the complement of $C \cup D$ are disks, so $C$ and $D$ fill $S$.
\end{proof}

\textbf{Remark.} We can do this construction more generally where you label each box along a $C_j$ strip with some ordering of each of the $D_i$'s, making sure that $D_i$ appears $Q_{ij}$-times. Then you make some choice of how to connect the various $D_i$ boxes across each strip, keeping in mind that the intersections must all have the same sign to ensure there are no bigons. For example, you can have $D_1$ wrap around $C_1$ twice then hit $C_2$ once, then $C_3$ twice, back up to $C_1$ once, etc. until all the $D_1$ boxes have been connected. This will still give an orientable surface with $C$ and $D$ as tight filling multicurves that intersect the correct number of times. For this more general construction, the genus of the surface is difficult to ascertain as different choices in arranging the curves can change the surface you obtain. For the construction given in the proof of Theorem 3.1 we can explicitly find the genus.

\begin{prop} 
Let $Q$ be a nonsingular positive integer $n\times n$ matrix whose entries are all greater than or equal to $2$. If $S$ is the closed orientable surface of genus $g$ obtained by following the construction given in the proof of Theorem $3.1$, then $g = n^2 - n + 1$.
\end{prop}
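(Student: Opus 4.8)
The plan is to compute $\chi(S)$ from the cell structure that $C\cup D$ induces on $S$ and then read off $g$ from $2-2g=\chi(S)$. The vertices, edges and faces of this structure are the intersection points, the arcs into which the curves are cut, and the complementary disks, respectively. Since geometric and algebraic intersection numbers agree here (as in the proof of Theorem 3.1), there are $V=\sum_{i,j}Q_{ij}$ vertices; and each $C_j$ (resp.\ $D_i$) is a circle with $\sum_i Q_{ij}$ (resp.\ $\sum_j Q_{ij}$) marked points, hence cut into that many arcs, so $E=2\sum_{i,j}Q_{ij}=2V$. Thus $\chi(S)=V-E+F=F-V$, and the whole problem reduces to counting the number $F$ of complementary disks.

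To count $F$ I would pass to the dual decomposition of $S$ into metric rectangles from the proof of Theorem 3.1: its $2$-cells are the rectangles (one per intersection point), its $1$-cells are the $2V$ glued edges, and its $0$-cells biject with the faces of $C\cup D$. After the identifications performed within a single strip, each rectangle corner still lies on exactly two edges awaiting gluing, so the identifications forced by the $D_i$-gluings make the corner graph $\Gamma$ (vertices $=$ partially glued corners, edges $=$ forced identifications) $2$-regular; hence $\Gamma$ is a disjoint union of cycles and $F$ is the number of them. I claim these cycles are of two kinds. First, for each $(i,j)$, each of the $Q_{ij}-2$ crossings of $D_i$ with $C_j$ lying strictly between the first and the last crossing in the corresponding block along $C_j$ produces a length-two cycle, i.e.\ a digon face; this contributes $\sum_{i,j}(Q_{ij}-2)$ faces. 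Second, the remaining corners are precisely the four corners attached to each of the $n$ block boundaries in each of the $n$ strips, and tracing the chaining of these corners through the within-block gluings and the cyclic strip-to-strip gluings shows they organize into exactly $2n$ further cycles — concretely the trace splits into two families, each consisting of $n$ cycles of length $2n$ indexed by a residue modulo $n$ (the strip index combined with the block index). Granting this, $F=\sum_{i,j}(Q_{ij}-2)+2n=V-2n^2+2n$, so $\chi(S)=F-V=2n-2n^2$ and $g=1-\tfrac12\chi(S)=n^2-n+1$.

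The hypothesis $Q_{ij}\ge 2$ is used precisely to guarantee that every block along a strip contains at least two rectangles, so that the first and last crossing in a block are genuinely distinct and the digon corners of the first kind are disjoint from the block-boundary corners of the second kind; if some $Q_{ij}$ equalled $1$ these would collide and the count would change. The main obstacle is the second kind: setting up the right labels for the block-boundary corners and verifying that the combined within-block and cyclic strip gluings chain them into exactly $2n$ cycles. This is a finite, elementary check once a good labeling is fixed, but it is the step demanding care; it is consistent with, and can be sanity-checked against, the observation that raising a single entry $Q_{ij}$ by one inserts one rectangle — hence one new vertex, two new edges (an arc of $C_j$ and one of $D_i$ each split), and exactly one new digon face — leaving $\chi(S)$ unchanged, in accordance with $g=n^2-n+1$ not depending on the entries.
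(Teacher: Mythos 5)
A preliminary remark: the paper does not actually prove Proposition 3.2 --- it defers the proof to a forthcoming paper --- so your proposal can only be judged on its own terms, not compared with an argument in the text. Your strategy is sound: with $V=\sum_{i,j}Q_{ij}$ intersection points and $E=2V$ arcs, everything reduces to counting complementary faces, and these are indeed in bijection with the cycles of the $2$-regular graph on the $2V$ strip-corner points whose edges are the endpoint identifications forced by the $D$-gluings. Your totals are also correct: $\sum_{i,j}(Q_{ij}-2)$ length-two cycles plus $2n$ longer ones gives $F=V-2n^2+2n$, hence $\chi=2n-2n^2$ and $g=n^2-n+1$, consistent with the $n=1$ torus case and with direct small checks for $n=2$. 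One conceptual error, though: calling the length-two cycles ``digon faces'' is wrong and in fact contradicts Theorem 3.1 --- all intersections have the same sign, the multicurves are tight, and so no complementary bigon can exist. Each strip-corner point carries two rectangle corners, so a length-two cycle of corner points is a complementary \emph{quadrilateral}. This is harmless for the count (only the number of cycles enters), but as written it is inconsistent with your own appeal to the agreement of geometric and algebraic intersection numbers.

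The substantive gap is that the decisive step --- that the remaining corners organize into exactly $2n$ cycles --- is asserted rather than proved; you flag it yourself as the step demanding care, so the argument is incomplete as it stands. The claim is true, and the missing check is short once labeled correctly: the corners left over after removing the length-two cycles are the bottom and top block-boundary corners $P_j^i$, $\hat P_j^i$ of strip $j$ together with, for each block of $D_i$ in strip $j$, the \emph{last bottom} interior corner $L_{ij}$ and the \emph{first top} interior corner $F_{ij}$ (so your description of the leftovers as the corners ``attached to block boundaries'' is slightly off, though the count $4n^2$ is right). With the translation convention for the edge gluings (which you should make explicit, since a flipped gluing would ruin orientability and the whole count), the forced identifications among these are $P_j^i\sim F_{i+1,j}$ and $P_j^i\sim F_{i,j+1}$, together with $\hat P_j^i\sim L_{ij}$ and $\hat P_j^i\sim L_{i+1,j-1}$. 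Tracing gives cycles $P_j^i\to F_{i+1,j}\to P_{j-1}^{i+1}\to\cdots$ and $\hat P_j^i\to L_{ij}\to\hat P_{j+1}^{i-1}\to\cdots$; in each double-step the strip index and the block index shift by one in opposite directions, so $i+j \bmod n$ is invariant and each cycle closes after $n$ double-steps. Hence there are two families of $n$ cycles of length $2n$, exactly as you predicted, and with this supplied your computation is complete; the hypothesis $Q_{ij}\ge 2$ enters just where you say it does.
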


A proof of this is included in a forthcoming paper that will go into more detail about properties of this construction. For now we see that this gives an upper bound for the genus of the surface. If we us the general construction described in the above remark we should be able to find a smaller genus surface with tight, filling multicurves whose intersection matrix is $Q$. We end this section by asking the following question:

\begin{question} For a given nonsingular positive integer matrix $Q$, what is the minimum genus surface that can be constructed, using the general construction, having tight filling multicurves $C$ and $D$ where $Q_{ij} = i(D_i, C_j)$?
\end{question}
	
\section{Initial Observations}

In this section we define Salem numbers, and look at two situations relating Salem numbers to Thurston's construction. The first observation will show that Salem numbers arise as stretch factors of pseudo-Anosov maps coming from Thurston's construction, and the second will lead us to a strategy for proving Theorem A.

\begin{definition}
A real algebraic unit $\lambda > 1$ is called a \textit{Salem number} if $\lambda^{-1}$ is a Galois conjugate, and all other conjugates lie on the unit circle.
\end{definition}

From this definition we can deduce the following properties of Salem numbers:

\begin{prop}
Let $\lambda$ be a Salem number. Then,
	\begin{enumerate}
			\item $\lambda^k$ is a Salem number for any positive integer $k$.\\
			\item $\lambda + \lambda^{-1}$ is a totally real algebraic integer.\\
			\item The Galois conjugates of $\lambda + \lambda^{-1}$ lie in the interval $(-2,2)$.
		\end{enumerate}
	\end{prop}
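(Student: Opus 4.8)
The plan is to work entirely with the minimal polynomial of $\lambda$ and its roots, and to deduce each statement by tracking where those roots go under the relevant operations. Recall the setup: since $\lambda$ is a Salem number, its minimal polynomial has roots $\lambda, \lambda^{-1}$ together with $2d-2$ further roots on the unit circle, which (being conjugate to the real roots) must come in complex-conjugate pairs $e^{\pm i\theta_1},\dots,e^{\pm i\theta_{d-1}}$; in particular $\lambda$ has even degree $2d$ with $d \ge 2$.

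For part (1), I would argue that the full set of conjugates of $\lambda^k$ is contained in $\{\mu^k : \mu \text{ a conjugate of } \lambda\}$ (Galois acts on powers), so every conjugate of $\lambda^k$ other than $\lambda^k$ is either $\lambda^{-k}=(\lambda^k)^{-1}$ or lies on the unit circle (a power of a unit-modulus number still has unit modulus). It remains to check $\lambda^k$ is not itself totally real or rational: $\lambda^k > 1$ is an algebraic unit, and if all its conjugates were real they would all be $\pm 1$ except $\lambda^{\pm k}$, contradicting irreducibility of the minimal polynomial of $\lambda$ together with the presence of the non-real conjugates $e^{ik\theta_j}$ unless some $k\theta_j \in \pi\Z$; one rules that out by noting a conjugate equal to $\pm 1$ would force $\lambda=\pm1$ via applying a Galois automorphism. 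So $\lambda^k$ is again a Salem number. For part (2), $\lambda + \lambda^{-1}$ is an algebraic integer (sum of two algebraic integers), and it lies in $\Q(\lambda + \lambda^{-1})$, which by Theorem 2.4 is a totally real field — alternatively, and more self-containedly, its conjugates are among $\mu + \mu^{-1}$ as $\mu$ ranges over conjugates of $\lambda$; for $\mu = e^{i\theta}$ this is $2\cos\theta \in \R$, and for $\mu = \lambda^{\pm1}$ it is $\lambda + \lambda^{-1} \in \R$, so all conjugates are real. Part (3) then follows immediately from the same computation: for the unit-circle conjugates, $\mu + \mu^{-1} = 2\cos\theta_j$, and since none of the $e^{\pm i\theta_j}$ equals $\pm1$ (that would force $\lambda = \pm 1$), we have $\theta_j \notin \pi\Z$, hence $2\cos\theta_j \in (-2,2)$; the conjugate coming from $\mu=\lambda^{\pm1}$ is $\lambda + \lambda^{-1} > 2$, but this is $\lambda + \lambda^{-1}$ itself, not one of its \emph{other} conjugates. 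So every Galois conjugate of $\lambda + \lambda^{-1}$ distinct from $\lambda+\lambda^{-1}$ lies in $(-2,2)$.

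The one place that needs genuine care — the main obstacle — is establishing that the relevant maps ($\mu \mapsto \mu^k$ in part (1), $\mu \mapsto \mu + \mu^{-1}$ in parts (2)–(3)) actually surject onto the full conjugate set of the image element, and that no collapsing occurs that would make $\lambda^k$ or $\lambda + \lambda^{-1}$ have spuriously fewer or different conjugates. The clean way around this is the Galois-orbit description: if $\sigma$ runs over embeddings of the splitting field, then $\sigma(\lambda^k) = \sigma(\lambda)^k$ and $\sigma(\lambda+\lambda^{-1}) = \sigma(\lambda) + \sigma(\lambda)^{-1}$, so the conjugates of the image are exactly the images of the conjugates of $\lambda$ — no surjectivity issue remains, only the membership/modulus bookkeeping above. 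The subsidiary point, that no conjugate of a Salem number can equal $\pm 1$ (used repeatedly to keep $k\theta_j$ away from $\pi\Z$), follows because $\pm1$ is fixed by every automorphism while $\lambda \neq \pm 1$.
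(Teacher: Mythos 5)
Your proposal is correct: the paper states this proposition without proof, treating it as immediate from the definition, and your Galois-orbit argument (conjugates of $\lambda^k$ and of $\lambda+\lambda^{-1}$ are exactly the images $\sigma(\lambda)^k$ and $\sigma(\lambda)+\sigma(\lambda)^{-1}$, with the observation that no conjugate can equal $\pm 1$ since $\pm 1$ is fixed by every automorphism) is precisely the standard deduction the author has in mind. The only quibble is your assumption that $\lambda$ has degree $2d$ with $d\geq 2$: the paper's definition also admits reciprocal quadratic units, but in that case all three statements hold trivially, so nothing is lost.
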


Since $\lambda + \lambda^{-1}$ is totally real, it is at least plausible that there are Salem numbers that are stretch factors arising from Thurston's construction. The following observation gives conditions when a Salem number arises as a stretch factor from Thurston's construction.

\vspace{1em}
\textbf{Observation 1.} \textit{There are Salem numbers that arise as stretch factors of pseudo-Anosov maps arising from Thurton's construction.}

\vspace{1em}
Suppose you have two tight, filling multicurves $C$ and $D$ where $|D| = 2$ and $|C| = k$, then $MQNQ^T$ is a $2 \times 2$ matrix. If $m_1 = m_2$, then it is also symmetric since $M$ commutes with $QNQ^T$. This matrix has two positive eigenvalues $\nu$ and $\mu$, where $\nu > \mu$ and which are roots of an integer polynomial
	\[x^2 - ax + b\]
where $\nu + \mu = a$ and $\nu\mu = b$. Now $T_C$ and $T_D$ are represented by the matrices in $(1)$ and their product gives
	\[[T_C][T_D] = \begin{bmatrix} 1- \nu & 1 \\ -\nu & 1\end{bmatrix}\]
The characteristic polynomial is $x^2 - (2 - \nu)x + 1$ and the polynomial 
\[f(x) = (x^2 - (2-\nu)x + 1)(x^2 - (2 -\mu)x + 1)\]
is an integer polynomial, and if we assume $|2 - \nu| > 2$ and $|2 - \mu| < 2$ then $f(x)$ will have two real roots, $\lambda$ and $\lambda^{-1}$, and two complex roots on the unit circle. This tells us that $\lambda$ is a Salem number of degree $4$ over the rationals.

\vspace{1em}
This situation is not difficult to set up.

\begin{example} Let $C = \lb C_1, C_2, C_3\rb$ and $D = \lb D_1, D_2\rb$ be the tight, filling multicurves on $S_2$ as in figure $1$.
We will choose $m_1 = m_2 = n_1 = n_2 = 2$ and $n_3 = 1$. Then
	\[
	   MQNQ^{T} = \begin{bmatrix} 8 & 4 \\ 4 & 6\end{bmatrix},
	\]
 where $\nu = 7 + \sqrt{17}$ and $\mu = 7 - \sqrt{17}$. We can see that $|2 - \nu| > 2$ and $|2 - \mu| < 2$, so $T_CT_D$ is a pseudo-Anosov map whose stretch factor is the Salem number 
\[\lambda = \df{5 + \sqrt{17} + \sqrt{38 + 10\sqrt{17}}}{2}.\]
\end{example}

The above is meant to highlight that Salem numbers arise without much effort from Thurston's construction, and they can appear when $MQNQ^T$ is symmetric. An obvious simplification then is if $Q$ is symmetric and  $M = N = I$.

\vspace{1em}
\textbf{Observation 2.} \textit{Salem numbers can arise as stretch factors when $Q$ is symmetric and $M = N = I$.}

\vspace{1em}
Suppose $S$ is a surface with tight, filling multicurves $C$ and $D$ with $|C| = |D|$ whose intersection matrix $Q$ (which is a square symmetric matrix) has $\lambda + \lambda^{-1}$ as an eigenvalue with positive eigenvector $\mathbf{v}$. Let $M = N = I$, then we have
	\[ 
		MQNQ^{T} = Q^2.
	\]
So we get that
\[
	[T_C] = \begin{bmatrix} 1 & 1\\ 0 & 1\end{bmatrix} \hspace{.2in}  \text{and} \hspace{.2in} [T_D] = \begin{bmatrix} 1 & 0 \\ -(\lambda + \lambda^{-1})^2 & 1\end{bmatrix}
\]
which gives
	\[[T_C][T_D] = \begin{bmatrix} 1 - (\lambda + \lambda^{-1})^2 & 1 \\ -(\lambda + \lambda^{-1})^2 & 1\end{bmatrix}\]
which has characteristic polynomial $x^2 + (\lambda^2 + \lambda^{-2})x + 1$ whose roots are $-\lambda^2$ and $-\lambda^{-2}$. Hence, up to projectivization, $T_CT_D$ has $\lambda^2$ as its stretch factor.

\vspace{1em}
With the above observations in mind our strategy for proving Theorem A is as follows: We will show that for any Salem number $\lambda$ there is a positive integer $k$ where $\lambda^k + \lambda^{-k}$ is the dominating eigenvalue of a positive symmetric integer matrix $Q$ with a corresponding positive eigenvector. From there we use the construction given in section 3 to build a closed orientable surface with two tight, filling multicurves having $Q$ as their intersection matrix. The discussion following observation $2$ shows that applying Thurston's construction to these multicurves with $M = N = I$ will give a pseudo-Anosov map with stretch factor $\lambda^{2k}$.

\section{Proof of Thereom A}

In this section we will assume that $\lambda$ is a Salem number. Some of the results here will be used to prove Theorem B, so we state them in their full generality with the goal of applying them to $\lambda + \lambda^{-1}$. Our starting point is the following theorem due to Estes, a proof of which can be found in \cite{Estes}.

\begin{thm}[Estes]
Let $\alpha$ be a totally real algebraic integer of degree $n$ over $\Q$, and $f(x)$ is its minimal polynomial. Then $\alpha$ is an eigenvalue of a rational symmetric matrix of size $(n + e) \times (n + e)$ whose characteristic polynomial is $f(x)(x-1)^e$, and $e \in \lb 1, 2\rb$.
\end{thm}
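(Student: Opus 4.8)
The plan is to recast the problem in the language of symmetric bilinear forms. A symmetric rational matrix of size $m$ is the same data, up to conjugation by rational orthogonal matrices, as a triple $(V,q,T)$ with $V$ an $m$-dimensional $\Q$-vector space, $q$ a positive definite symmetric $\Q$-bilinear form that is isometric over $\Q$ to $x_1^2+\cdots+x_m^2$, and $T$ a $q$-self-adjoint operator: passing to a $q$-orthonormal basis realizes $T$ as a symmetric rational matrix without changing its characteristic polynomial. So I would start with $K=\Q(\alpha)$ and $T$ equal to multiplication by $\alpha$, whose characteristic polynomial on $K$ is $f$. The symmetric $\Q$-bilinear forms on $K$ making multiplication by $\alpha$ (equivalently, by every element of $K$) self-adjoint are exactly the twisted trace forms $q_\beta(x,y)=\mathrm{Tr}_{K/\Q}(\beta x y)$, $\beta\in K^{\times}$, and $q_\beta$ is positive definite precisely when $\beta$ is totally positive; this is where total realness of $\alpha$ is used, since it guarantees such $\beta$ exist and that $q_\beta(x,x)=\sum_{\sigma}\sigma(\beta)\,\sigma(x)^2>0$.

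Next I would introduce the padding that produces the factor $(x-1)^e$. Given $e\ge 0$, positive rationals $a_1,\ldots,a_e$, and a totally positive $\beta$, set $V=K\oplus\Q^e$, $T=m_\alpha\oplus\mathrm{id}$, and $\widetilde q=q_\beta\perp\langle a_1,\ldots,a_e\rangle$. The identity is self-adjoint for any form on $\Q^e$, so $T$ is $\widetilde q$-self-adjoint, its characteristic polynomial is $f(x)(x-1)^e$, and $\alpha$ is an eigenvalue (the case $\alpha=1$ being trivial, so assume $f(1)\neq 0$). Thus the theorem reduces to the arithmetic statement: there are a totally positive $\beta$, an $e\in\{1,2\}$, and positive rationals $a_1,\ldots,a_e$ with $q_\beta\perp\langle a_1,\ldots,a_e\rangle\cong x_1^2+\cdots+x_{n+e}^2$ over $\Q$. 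By Hasse--Minkowski, a positive definite $\Q$-form of rank $m$ is a sum of $m$ squares if and only if its determinant (automatically positive) is a square in $\Q^{\times}$ and its Hasse invariant is trivial at every finite prime. The determinant of $q_\beta$ lies in the class of $d_K\cdot N_{K/\Q}(\beta)$ modulo squares, and one auxiliary square $\langle a_1\rangle$ with $a_1>0$ in the appropriate class makes the total determinant a square; the remaining freedom (in the further $a_i$, and in $\beta$) must then trivialize the Hasse invariants.

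The hard part is this last step: showing that at most two extra squares always suffice for the Hasse condition. With $a_1$ pinned down by the determinant, $q_\beta\perp\langle a_1\rangle$ is a sum of $n+1$ squares exactly when $\epsilon_p(q_\beta)\,(\det q_\beta,-1)_p=1$ for all $p$; when this fails for every admissible $\beta$, one passes to $q_\beta\perp\langle a_1,a_2\rangle$ and must solve $(a_1,-\det q_\beta)_p=\epsilon_p(q_\beta)\,(\det q_\beta,-1)_p$ simultaneously at all primes. The product formula for the Hilbert symbol, together with the vanishing of the archimedean Hasse invariant of a positive definite form, forces the right-hand side to have trivial product over all $p$, which is the compatibility needed to realize it as $\big((a_1,-\det q_\beta)_p\big)_p$ for a single rational $a_1$ (automatically positive); the sticking point is that this realization can be obstructed at a prime where $-\det q_\beta$ is a local square, and removing this obstruction is where one must exploit the constrained arithmetic of trace forms -- e.g. the known formulas (after Serre) relating their Hasse--Witt invariants to the discriminant of $K$ -- and the freedom to adjust $\beta$. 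Carrying out this local--global bookkeeping cleanly, rather than the essentially formal reductions above, is the substance of the proof, and it is what pins the bound on $e$ at $2$ rather than something larger.
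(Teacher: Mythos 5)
You have not given a proof; you have given a correct reduction followed by an explicit admission that the decisive step is missing. The paper itself does not prove this theorem either --- it cites Estes for it --- so the only question is whether your argument stands on its own, and it does not. Your formal part is fine: identifying a rational symmetric matrix with a triple $(V,q,T)$ where $q$ is $\Q$-isometric to the unit form and $T$ is $q$-self-adjoint, taking $T$ to be multiplication by $\alpha$ on $K=\Q(\alpha)$, observing that the admissible forms are the twisted trace forms $q_\beta(x,y)=\mathrm{Tr}_{K/\Q}(\beta xy)$ with $\beta$ totally positive, and padding by $\mathrm{id}$ on $\Q^e$ to introduce the factor $(x-1)^e$. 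This is indeed the standard strategy (it is essentially how Estes, and Estes--Guralnick, attack such realization problems). But the entire content of the theorem is the arithmetic claim you defer: that for some totally positive $\beta\in K^\times$, some $e\in\{1,2\}$, and positive rationals $a_1,\dots,a_e$, the form $q_\beta\perp\langle a_1,\dots,a_e\rangle$ has square determinant and trivial Hasse invariant at every finite prime, hence is rationally isometric to $x_1^2+\cdots+x_{n+e}^2$. You state the Hilbert-symbol equations that must be solved, note the product-formula compatibility, flag the obstruction at primes where $-\det q_\beta$ is a local square, and then say that resolving this ``is the substance of the proof.'' Exactly so --- and it is absent. Nothing in your write-up shows that the freedom in $\beta$ (via the structure of trace forms and their Hasse--Witt invariants) together with at most two auxiliary diagonal entries always suffices, which is precisely what pins $e$ at $2$; without that argument the bound $e\in\{1,2\}$, and hence the theorem, is unproved.

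A smaller point to watch if you do complete it: when $e=2$ you have two parameters $a_1,a_2$ but also two constraints (determinant class and Hasse invariants at all finite primes simultaneously), so the bookkeeping must be done with the determinant condition imposed first and the remaining Hilbert-symbol system solved by a single parameter subject to the product formula; the case analysis at dyadic primes and at primes dividing $\det q_\beta$ is where the real work lies, and where the choice of $\beta$ (not just the $a_i$) must enter.
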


Since $\lambda + \lambda^{-1}$ is a totally real algebraic integer, then by Estes we know that there is a rational symmetric matrix $Q$ having the following eigenvalues: $\lambda + \lambda^{-1}$, the Galois conjugates of $\lambda + \lambda^{-1}$, and $1$ with multiplicity $1$ or $2$. Though Estes makes no claims about the eigenvectors related to these eigenvalues we can without loss of generality assume that $\lambda + \lambda^{-1}$ has a positive eigenvector. We do this by using the following:

\begin{prop}
The set $O(n;\Q)$, orthogonal matrices with rational entries, is a dense subgroup of $O(n)$. Consequently, $SO(n;\Q)$ is a dense subgroup of $SO(n)$.
\end{prop}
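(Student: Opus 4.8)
The plan is to prove density by an explicit generation argument using rational rotations in coordinate planes, i.e.\ rational Givens rotations. The key observation is that a rotation by angle $\theta$ in the $(i,j)$-coordinate plane has matrix entries $\cos\theta$ and $\pm\sin\theta$, and the Pythagorean parametrization tells us that whenever $a^2 + b^2 = c^2$ with $a,b,c \in \Z$, the pair $(\cos\theta,\sin\theta) = (a/c, b/c)$ is rational. Since there are infinitely many primitive Pythagorean triples, the set of angles $\theta$ for which the corresponding planar rotation is rational is dense in $[0,2\pi)$ — this follows because the rational points on the unit circle are dense, a standard consequence of stereographic projection from $(-1,0)$, which sends a rational slope $t$ to the point $\left(\tfrac{1-t^2}{1+t^2},\tfrac{2t}{1+t^2}\right)$.

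First I would establish this density of rational planar rotations inside $SO(2)$. Next, I would recall the standard fact that $SO(n)$ is generated (as a topological group, and in fact every element is a finite product) by rotations in the $n-1$ coordinate planes $(1,2),(2,3),\dots,(n-1,n)$, or more simply by all coordinate-plane rotations; concretely, the Euler-angle / Givens decomposition writes any element of $SO(n)$ as a product of at most $\binom{n}{2}$ such planar rotations with continuously varying angles. Then, given a target $R \in SO(n)$ and its decomposition into planar rotations with angles $\theta_1,\dots,\theta_N$, I would approximate each $\theta_k$ by an angle $\theta_k'$ whose planar rotation is rational, using the first step; the product of the approximating rational Givens rotations is a rational matrix (the rationals are closed under products) lying in $SO(n;\Q)$, and it is close to $R$ because matrix multiplication is continuous. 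This shows $SO(n;\Q)$ is dense in $SO(n)$. For the full orthogonal group, $O(n)$ has two components and $O(n;\Q)$ meets the non-identity component (e.g.\ via the rational reflection $\mathrm{diag}(-1,1,\dots,1)$), so density in $SO(n)$ upgrades to density in $O(n)$; that $O(n;\Q)$ and $SO(n;\Q)$ are subgroups is immediate since $\Q$ is a field closed under the arithmetic involved in matrix multiplication and since $\det$ and transpose-inverse preserve rationality.

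The main obstacle, and the only place any real care is needed, is the uniformity in the approximation step: I must ensure that approximating each angle $\theta_k$ individually by a nearby ``rational-rotation angle'' yields a product close to the original product. This is handled by the joint continuity of the map $SO(2)^N \to SO(n)$ sending $(\rho_1,\dots,\rho_N)$ to the product of the corresponding Givens rotations — a polynomial map, hence uniformly continuous on the compact domain — so for any $\varepsilon>0$ there is a $\delta>0$ such that $\delta$-close choices in each factor give an $\varepsilon$-close product. Combining this with the density from step one finishes the argument. Everything else is bookkeeping, so I will keep the writeup brief.
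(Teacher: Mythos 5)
Your argument is correct: density of rational points on the unit circle (via the stereographic/Pythagorean parametrization), the fact that every element of $SO(n)$ factors as a finite product of coordinate-plane (Givens) rotations, and uniform continuity of the product map on a compact domain together give density of $SO(n;\Q)$ in $SO(n)$, and multiplying by the rational reflection $\mathrm{diag}(-1,1,\dots,1)$ transfers this to the other component of $O(n)$. The only overstatement is the claim that the Givens angles vary continuously with the target matrix; Euler-angle-type parametrizations have coordinate singularities, but you never actually need this, since for a fixed target one decomposition suffices and your uniform-continuity step does the rest. Note that the paper itself does not prove this proposition at all: it simply cites Schmutz's paper on rational points on $O(n)$, where the result is obtained with explicit, quantitative bounds on how well (and with how small denominators) a given orthogonal matrix can be approximated --- the classical non-effective alternative being the Cayley transform $S \mapsto (I-S)(I+S)^{-1}$, which carries the dense set of rational skew-symmetric matrices onto a dense set of rational orthogonal matrices. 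So your route is a genuinely self-contained, elementary replacement for the citation: it buys a complete proof from scratch at the cost of giving no control on denominators, which is all the paper needs, since the proposition is only used to perturb a conjugating matrix while keeping an eigenvector positive.
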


A proof for this can be found in \cite{schmutz}. We use Proposition $5.2$ as follows: Since every element $U$ of $SO(n)$ has the property that $U^{-1} = U^T$ then a symmetric matrix conjugated by an $SO(n)$ matrix remains symmetric. $Q$ has eigenvalue $\lambda + \lambda^{-1}$, so we conjugate $Q$ by an $SO(n)$ matrix so the resulting matrix has a positive eigenvector corresponding to $\lambda + \lambda^{-1}$. We then perturb the entries of $U$ so that they are all rational and now the resulting matrix is still rational and symmetric having a positive eigenvector $\mathbf{v}$ corresponding to $\lambda + \lambda^{-1}$.

\vspace{1em}
We now want a rational matrix having $\lambda$ as an eigenvalue with the goal of finding a power of that matrix that has integer entries. Consider the matrix
	\[\mathcal{M} = \begin{bmatrix} Q & -I \\ I & \mathbf{0}\end{bmatrix},\]

where $I$ is the $(n + e) \times (n + e)$ identity matrix. We will use powers of this matrix to find a positive symmetric integer matrix having $\lambda^k + \lambda^{-k}$ as an eigenvalue for some $k$. We will now establish several important properties:

\begin{prop} The characteristic polynomial of $\mathcal{M}$ is $p(x)(x^2 - x + 1)^e$ where $p(x)$ is the minimal polynomial of $\lambda$ over $\Q$. Therefore, $\mathcal{M}$ has integral characteristic polynomial, and $\det(\mathcal{M}) = 1$.
\end{prop}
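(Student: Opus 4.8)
The plan is to compute the characteristic polynomial of $\mathcal{M}$ directly from its block form. Over the rational function field $\Q(x)$ the bottom–right block of
\[
xI - \mathcal{M} = \begin{bmatrix} xI - Q & I \\ -I & xI \end{bmatrix}
\]
is the invertible matrix $xI$, so the Schur complement formula gives
\[
\det(xI - \mathcal{M}) = \det(xI)\,\det\!\big( (xI - Q) + I\,(xI)^{-1}\, I\big) = x^{n+e}\,\det\!\big( (x + x^{-1})I - Q\big).
\]
By Estes's theorem (the subsequent conjugation by a rational orthogonal matrix does not change the characteristic polynomial), $Q$ is an $(n+e)\times(n+e)$ rational symmetric matrix with characteristic polynomial $f(y)(y-1)^e$, where $f$ is the degree-$n$ minimal polynomial of $\lambda + \lambda^{-1}$ and $e\in\{1,2\}$. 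Hence $\det\!\big((x+x^{-1})I - Q\big) = f(x+x^{-1})\,(x + x^{-1} - 1)^e$, and therefore
\[
\det(xI - \mathcal{M}) = \Big( x^n f(x + x^{-1})\Big)\cdot\Big( x^e (x + x^{-1} - 1)^e\Big) = \Big( x^n f(x+x^{-1})\Big)\,(x^2 - x + 1)^e ,
\]
which is an identity of polynomials, since it holds on $\Q(x)\setminus\{0\}$ and both sides are polynomials.

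Next I would identify $x^n f(x+x^{-1})$ with the minimal polynomial $p(x)$ of $\lambda$. Writing $f(y) = \prod_{i=1}^{n}(y - \mu_i)$ with $\mu_1 = \lambda + \lambda^{-1}$, one gets $x^n f(x+x^{-1}) = \prod_{i=1}^{n}(x^2 - \mu_i x + 1)$, which is monic of degree $2n$; since $f$ has integer coefficients and each $x^{n-k}(x^2+1)^k$ with $0\le k\le n$ is a polynomial, this product has integer coefficients; and it vanishes at $x=\lambda$ because $f(\lambda + \lambda^{-1}) = 0$. It then suffices to know that $\lambda$ has degree exactly $2n$ over $\Q$, so that this polynomial must be $p(x)$. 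With that in hand, $\det(xI - \mathcal{M}) = p(x)(x^2 - x + 1)^e$, and in particular the characteristic polynomial of $\mathcal{M}$ is a product of monic integer polynomials, hence integral.

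For the determinant I would evaluate the characteristic polynomial at $x=0$: $\det(-\mathcal{M}) = p(0)\,(0^2 - 0 + 1)^e = p(0) = \prod_{i=1}^{n}(0 - 0 + 1) = 1$, and since $\mathcal{M}$ has even size $2(n+e)$ this yields $\det(\mathcal{M}) = 1$. (Equivalently, swapping the two block rows of $\mathcal{M}$ introduces the sign $(-1)^{(n+e)^2} = (-1)^{n+e}$ and reduces the problem to $\det\begin{bmatrix} I & 0 \\ Q & -I\end{bmatrix} = (-1)^{n+e}$, so again $\det(\mathcal{M}) = 1$.)

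I do not anticipate a serious obstacle: the bulk of the argument is a routine Schur-complement manipulation. The one point that genuinely needs justification is that $x^n f(x+x^{-1})$ is the \emph{minimal} polynomial of $\lambda$, not merely a monic integer polynomial annihilating it — that is, that $[\Q(\lambda):\Q] = 2n$. This is exactly where the Salem hypothesis is used: $\Q(\lambda + \lambda^{-1})$ is totally real of degree $n$ while $\Q(\lambda)$ has complex places, so $\lambda \notin \Q(\lambda + \lambda^{-1})$ and hence $[\Q(\lambda):\Q] = 2[\Q(\lambda + \lambda^{-1}):\Q] = 2n$.
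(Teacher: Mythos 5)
Your proof is correct, but it takes a genuinely different route from the paper. The paper argues spectrally: it shows that $\mu$ is an eigenvalue of $\mathcal{M}$ if and only if $\mu + \mu^{-1}$ is an eigenvalue of $Q$, by exhibiting the explicit eigenvectors $\begin{bmatrix}\mathbf{x} \\ \mu^{-1}\mathbf{x}\end{bmatrix}$, and then reads off the characteristic polynomial from the known spectrum of $Q$ (namely $\lambda+\lambda^{-1}$, its conjugates, and $1$ with multiplicity $e$), obtaining $\det(\mathcal{M})=1$ as the product of the eigenvalues. You instead compute $\det(xI-\mathcal{M})$ in closed form via a Schur complement over $\Q(x)$, getting $\det(xI-\mathcal{M}) = x^{n+e}\det\bigl((x+x^{-1})I - Q\bigr)$, which delivers the factorization $p(x)(x^2-x+1)^e$ with the multiplicities and the integrality built in, and gives $\det(\mathcal{M})=1$ by evaluating at $x=0$. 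Your version is tighter on two points the paper treats informally: the eigenvalue--eigenvector correspondence alone does not immediately account for multiplicities (why the cyclotomic factor appears with exponent exactly $e$, and why the $\lambda$-part has degree exactly $2n$), whereas your determinant identity does, and you explicitly flag the one genuinely arithmetic input --- that $x^n f(x+x^{-1})$ is the minimal polynomial of $\lambda$, i.e.\ $[\Q(\lambda):\Q]=2n$, which uses the Salem hypothesis ($\Q(\lambda+\lambda^{-1})$ is totally real while $\lambda$ has non-real conjugates). What the paper's approach buys in exchange is the explicit eigenvectors of $\mathcal{M}$, which are reused later in the argument (the skew property and the positivity discussion for $\mathcal{Q}_k$), so the two proofs are complementary rather than redundant.
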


\begin{proof} We will first show that $\mu$ is an eigenvalue for $\mathcal{M}$ if and only if $\mu + \mu^{-1}$ is an eigenvalue for $Q$. If $\mu$ is an eigenvalue for $\mathcal{M}$ then there is a vector $\begin{bmatrix} \mathbf{x} \\ \mathbf{y}\end{bmatrix}$ such that
	\begin{align*}
				\mathcal{M}\begin{bmatrix} \mathbf{x} \\ \mathbf{y}\end{bmatrix}   &= \begin{bmatrix} \mu\mathbf{x} \\ \mu\mathbf{y}\end{bmatrix}\\
				\begin{bmatrix}Q\mathbf{x} - \mathbf{y} \\ \mathbf{x}\end{bmatrix} &= \begin{bmatrix} \mu\mathbf{x} \\ \mu\mathbf{y}\end{bmatrix}
		\end{align*}
so $\mathbf{y} = \mu^{-1}\mathbf{x}$, and thus we have $Q\mathbf{x} = (\mu + \mu^{-1})\mathbf{x}$, therefore $\mu + \mu^{-1}$ must be an eigenvalue of $Q$.

\vspace{1em}
Now, if $\mu + \mu^{-1}$ is an eigenvalue of $Q$ with corresponding eigenvector $\mathbf{x}$, then we have
\begin{align*}
			\mathcal{M}\begin{bmatrix} \mathbf{x} \\ \mu^{-1}\mathbf{x}\end{bmatrix} &= \begin{bmatrix} Q\mathbf{x} - \mu^{-1}\mathbf{x} \\ \mathbf{x}\end{bmatrix}\\
																																									 &= \begin{bmatrix} (\mu + \mu^{-1})\mathbf{x} - \mu^{-1}\mathbf{x} \\ \mu\cdot\mu^{-1}\mathbf{x}\end{bmatrix}\\
																																									 &= \mu\begin{bmatrix} \mathbf{x} \\ \mu^{-1}\mathbf{x} \end{bmatrix}
		\end{align*}

Hence, $\mu$ is an eigenvalue of $\mathcal{M}$. Therefore, since $\lambda + \lambda^{-1}$ and its conjugates are eigenvalues of $Q$ then $\lambda$ and its conjugates are eigenvalues of $\mathcal{M}$. Also, since $1$ is an eigenvalue with multiplicity $e$ then $\mathcal{M}$ must have an eigenvalue $\mu$ such that $\mu + \mu^{-1} = 1$, in other words $\mu^2 - \mu + 1 = 0$, hence
	\[ \mu = \df{1 + i\sqrt{3}}{2} \hspace{.2in} \text{and} \hspace{.2in} \mu^{-1} = \df{1 - i\sqrt{3}}{2}\]
(where $\mu$ is a primitive $6$th root of unity) and if $\mathbf{y}$ is an eigenvector of $Q$ corresponding to $1$ then $\begin{bmatrix} \mathbf{y} \\ \mu^{-1}\mathbf{y}\end{bmatrix}$ and $\begin{bmatrix} \mathbf{y} \\ \mu\mathbf{y}\end{bmatrix}$ are eigenvectors of $\mathcal{M}$ corresponding to $\mu$ and $\mu^{-1}$, respectively. So if $1$ has multiplicity $e$, then $\mu$ and $\mu^{-1}$ are eigenvalues of $\mathcal{M}$ both with multiplicity $e$. Therefore, we have shown that the characteristic polynomial of $\mathcal{M}$ has the desired form.

\vspace{1em}
Finally, since $\lambda$ and $\mu$ are algebraic units then the characteristic polynomial of $\mathcal{M}$ is integral, and the product of the eigenvalues is $1$ so $\det(\mathcal{M}) = 1$.
\end{proof}

The fact that $\det(\mathcal{M}) = 1$ is true for any square matrix $Q$ since $I$ and $\mathbf{0}$ commute we have
	\[\det(\mathcal{M}) = \det(Q\cdot\mathbf{0} + I^2) = 1\]
but most importantly this tells us that $\mathcal{M}^{-1}$ exists. It is easy to check that 
	\[\mathcal{M}^{-1} = \begin{bmatrix} \mathbf{0} & I \\ -I & Q\end{bmatrix}\]
and notice that
	\[\mathcal{M} + \mathcal{M}^{-1} = \begin{bmatrix} Q & \mathbf{0} \\ \mathbf{0} & Q \end{bmatrix}\]

We will show that this behavior holds for all powers of $\mathcal{M}$.

\begin{prop}[Skew Property] $\mathcal{M}^k + \mathcal{M}^{-k}$ is a block diagonal matrix of the form $\begin{bmatrix} \mathcal{Q}_k & \mathbf{0} \\ \mathbf{0} & \mathcal{Q}_k\end{bmatrix}$ for any integer $k$. Here $\mathcal{Q}_k$ is a rational symmetric matrix whose characteristic polynomial is $g_k(x)(x - a)^e$, where $g_k(x)$ is the minimal polynomial of $\lambda^k + \lambda^{-k}$, and $a \in \lb-2,-1,1,2\rb$.
\end{prop}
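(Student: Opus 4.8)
The plan is to reduce the whole statement to a single matrix identity. Write $R := \mathcal{M} + \mathcal{M}^{-1}$, which was computed just above to be the block-diagonal matrix $\begin{bmatrix} Q & \mathbf{0} \\ \mathbf{0} & Q\end{bmatrix}$. There is a classical sequence of integer polynomials $D_k$ — determined by $D_0 = 2$, $D_1(x) = x$, and $D_{k+1}(x) = xD_k(x) - D_{k-1}(x)$ — satisfying the Laurent-polynomial identity $z^k + z^{-k} = D_k(z + z^{-1})$. Since $\mathcal{M}$ and $\mathcal{M}^{-1}$ commute, substituting $z = \mathcal{M}$ gives $\mathcal{M}^k + \mathcal{M}^{-k} = D_k(R)$; as the left-hand side is unchanged under $k \mapsto -k$, it suffices to treat $k \ge 1$. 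Because $R$ is block diagonal with both diagonal blocks equal to $Q$, so is $D_k(R)$, with both blocks equal to $\mathcal{Q}_k := D_k(Q)$; this yields the asserted block form. Since $\mathcal{Q}_k$ is an integer-coefficient polynomial in the rational symmetric matrix $Q$, it is itself rational and symmetric.

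It remains to identify the characteristic polynomial of $\mathcal{Q}_k = D_k(Q)$. By construction $Q$ is a rational symmetric matrix whose eigenvalues are the $n$ conjugates of $\lambda + \lambda^{-1}$ together with $1$ of multiplicity $e$; hence the eigenvalues of $\mathcal{Q}_k$ are $D_k$ of these. Each conjugate of $\lambda + \lambda^{-1}$ has the form $\beta + \beta^{-1}$ for a conjugate $\beta$ of $\lambda$, and $D_k(\beta + \beta^{-1}) = \beta^k + \beta^{-k}$, a conjugate of $\lambda^k + \lambda^{-k}$; meanwhile $D_k(1) = \mu^k + \mu^{-k} = 2\cos(k\pi/3) =: a$ for the primitive sixth root of unity $\mu$ of the previous proposition, so a check on $k \bmod 6$ gives $a \in \lb -2, -1, 1, 2\rb$. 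Thus $\mathcal{Q}_k$ has the eigenvalue $a$ with multiplicity $e$, and $n$ further eigenvalues, each a conjugate of $\lambda^k + \lambda^{-k}$.

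To finish I need those $n$ eigenvalues to be exactly the distinct roots of the minimal polynomial $g_k$ of $\lambda^k + \lambda^{-k}$, which (by a transitivity argument for the Galois action on the fibers) holds precisely when $\deg(\lambda^k + \lambda^{-k}) = n$, i.e. when $\Q(\lambda^k + \lambda^{-k}) = \Q(\lambda + \lambda^{-1})$. This arithmetic input is the main obstacle, and I would establish it in two steps. First, $\Q(\lambda^k) = \Q(\lambda)$: if $\Q(\lambda^k)$ had index $m > 1$ in $\Q(\lambda)$, then the inclusion embedding of $\Q(\lambda^k)$ would have $m$ extensions to $\Q(\lambda)$, each sending $\lambda$ to a conjugate $\beta$ with $\beta^k = \lambda^k$ and hence $|\beta| = \lambda$; but $\lambda$ is the only conjugate of a Salem number with absolute value $\lambda$, forcing $m = 1$. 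Second, $\Q(\lambda^k + \lambda^{-k})$ is a totally real subfield of $\Q(\lambda^k) = \Q(\lambda)$ of index at most $2$ (since $\lambda^k$ is a root of $x^2 - (\lambda^k + \lambda^{-k})x + 1$ over it), and the index cannot be $1$ because $\Q(\lambda)$ is not totally real; so its degree is $\tfrac12\deg(\lambda) = n$, and a totally real subfield of $\Q(\lambda)$ of degree $n$ must be $\Q(\lambda + \lambda^{-1})$. With this in hand the $n$ eigenvalues above are the $n$ distinct conjugates of $\lambda^k + \lambda^{-k}$, so the characteristic polynomial of $\mathcal{Q}_k$ is $g_k(x)(x - a)^e$, as claimed.
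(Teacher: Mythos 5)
Your proof is correct, but it reaches the block form by a different route than the paper and is in one respect more complete. The paper proves by induction that $\mathcal{M}^k = \begin{bmatrix} Q_k & -Q_{k-1} \\ Q_{k-1} & Q_k - Q\cdot Q_{k-1}\end{bmatrix}$ with $Q_{k+1} = Q\cdot Q_k - Q_{k-1}$, does the analogous induction for $\mathcal{M}^{-k}$, and reads off $\mathcal{Q}_k = 2Q_k - Q\cdot Q_{k-1}$; you instead invoke the identity $z^k + z^{-k} = D_k(z+z^{-1})$ for the Chebyshev-type polynomials $D_k$ and substitute the commuting pair $\mathcal{M},\mathcal{M}^{-1}$, obtaining $\mathcal{M}^k+\mathcal{M}^{-k} = D_k(\mathcal{M}+\mathcal{M}^{-1})$ and hence $\mathcal{Q}_k = D_k(Q)$ in one stroke (the two formulas agree: $2Q_k - Q\cdot Q_{k-1} = D_k(Q)$). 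Your version avoids the double induction, makes it transparent that $\mathcal{Q}_k$ is a polynomial in $Q$ (a fact the paper needs anyway, right after the proposition, to see that $\mathcal{Q}_k$ inherits the positive eigenvector), and computes the eigenvalue at $1$ as $D_k(1)=2\cos(k\pi/3)$ rather than via the paper's mod-$6$ chart — same content. More significantly, you supply a step the paper passes over in silence: to know the characteristic polynomial is exactly $g_k(x)(x-a)^e$ one must know that the $n$ eigenvalues $\beta^k+\beta^{-k}$ are the $n$ distinct roots of $g_k$, i.e.\ that $\deg(\lambda^k+\lambda^{-k}) = \deg(\lambda+\lambda^{-1})$, and your argument ($\Q(\lambda^k)=\Q(\lambda)$ because $\lambda$ is the unique conjugate of absolute value $\lambda$, then the index-two step) is the right one. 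The only caveat is that your justification that the index cannot be $1$ ("$\Q(\lambda)$ is not totally real") presumes $\lambda$ has a nonreal conjugate on the unit circle, i.e.\ degree at least $4$; this matches the paper's usage of Salem numbers, but if quadratic reciprocal units were admitted you would instead note directly that $\lambda^k+\lambda^{-k}$ is then rational, so the conclusion still holds.
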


\begin{proof} 
First we will show that for any $k$ we have $\mathcal{M}^k = \begin{bmatrix} Q_k & -Q_{k-1} \\ Q_{k-1} & Q_k - Q\cdot Q_{k-1}\end{bmatrix}$ where $Q_k$ is an integral combination of powers of $Q$. We define $Q_0 = I$ and $Q_1 = A$. We will proceed by induction: For $k = 2$ we get
	\[\mathcal{M}^2  = \begin{bmatrix} Q^2 - I & -Q \\ Q & -I\end{bmatrix}\]
So $Q_2 = Q^2 - I$, and $-I = Q_2 - Q\cdot Q_1$. Now assume that this form holds for $k$, then
	\[\mathcal{M}^{k} = \begin{bmatrix} Q_{k} & -Q_{k-1} \\ Q_{k-1} & Q_{k} - Q\cdot Q_{k-1}\end{bmatrix}\]
in which case we have
	\begin{align*}
			\mathcal{M}^{k+1} &= \begin{bmatrix} Q & -I \\ I & \mathbf{0}\end{bmatrix}\begin{bmatrix} Q_{k} & -Q_{k-1} \\ Q_{k-1} & Q_{k} - Q\cdot Q_{k-1}\end{bmatrix}\\
										    &= \begin{bmatrix} Q\cdot Q_{k} - Q_{k-1} & -Q_{k} \\ Q_{k} & -Q_{k-1} \end{bmatrix}
		\end{align*}
so if $Q_{k+1} = Q\cdot Q_k - Q_{k-1}$ then we have that
	\[\begin{bmatrix} Q_{k+1} & -Q_k \\ Q_k & Q_{k+1} - Q\cdot Q_k \end{bmatrix}\]

A similar inductive argument shows that $Q^{-k}$ has the form $\begin{bmatrix} Q_k - Q\cdot Q_{k-1} & Q_{k-1} \\ -Q_{k-1} & Q_k\end{bmatrix}$ for all $k$. Therefore, we have that
	\[\mathcal{M}^{k} + \mathcal{M}^{-k} = \begin{bmatrix} 2Q_k - Q\cdot Q_{k-1} & \mathbf{0} \\ \mathbf{0} & 2Q_k - Q\cdot Q_{k-1}\end{bmatrix}\]
so $\mathcal{Q}_k = 2Q_k - Q\cdot Q_{k-1}$, which is an integral combination of powers of $Q$ so $\mathcal{Q}_k$ is a rational symmetric matrix. Now that we have established the skew-property where $\mathcal{M}^{k} + \mathcal{M}^{-k}$ is a block diagonal matrix where the $(1,1)$-block and the $(2,2)$-block are equal rational symmetric matrices, then we can see that not only are the eigenvalues of $\mathcal{M}^{k} + \mathcal{M}^{-k}$ of the form $\nu_k + \nu_k^{-1}$, where $\nu_k$ is an eigenvalue of $\mathcal{M}^k$ but also that $\nu_k + \nu_k^{-1}$ is an eigenvalue of $\mathcal{M}^{k} + \mathcal{M}^{-k}$ if and only if $\nu_k + \nu_k^{-1}$ is an eigenvalue of each diagonal block.

\vspace{1em}
We immediately get from this that $\lambda^k + \lambda^{-k}$ and its conjugates are eigenvalues of $\mathcal{Q}_k$. Since $\mathcal{M}^k$ has $\mu^k$ and $\mu^{-k}$ as eigenvalues then we need to determine the possibilities for $\mu^k + \mu^{-k}$. Since $\mu$ is a primitive $6$th root of unity we have the following chart where $g_k(x)$ denotes the minimal polynomial of $\lambda^k + \lambda^{-k}$:
\begin{center}
	\begin{tabular}{ |l|c|c|c| } 
		\hline
$k \equiv b \mod 6$ & $\mu^k + \mu^{-k}$ & \text{Characteristic Polynomial of $\mathcal{Q}_k$} \\
		\hline
$b = 0$ & $2$ & $g_k(x)(x - 2)^e$\\
$b = 1,5$ & $1$ & $g_k(x)(x - 1)^e$\\ 
$b = 2,4$ & $-1$ & $g_k(x)(x + 1)^e$\\ 
$b = 3$   & $-2$ & $g_k(x)(x + 2)^e$\\ 
		\hline
	\end{tabular}
\end{center}
Therefore, we have proved Proposition $5.4$.
\end{proof}

Since $\mathcal{Q}_k$ is an integral combination of powers of $Q$, then it is clear that the eigenspaces of $\mathcal{Q}_k$ and $Q$ are exactly the same for all $k$. Therefore, since $Q$ has a positive eigenvector $\mathbf{v}$ corresponding to $\lambda + \lambda^{-1}$, then $\mathcal{Q}_k$ also has eigenvector $\mathbf{v}$ corresponding to $\lambda^k + \lambda^{-k}$. The goal now is to use this, in conjunction with the next proposition, to show that there is a $k$ where $\mathcal{Q}_k$ is a positive symmetric integral matrix.

\begin{prop}
Let $M \in M_n(\Q)$ such that $\det(M) = \pm 1$ and the characteristic polynomial of $M$ has integer coefficients. Then some power of $M$ is integral.
\end{prop}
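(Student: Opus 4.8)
The plan is to build, out of $M$, a genuine lattice on which $M$ acts \emph{invertibly}, and then run a pigeonhole argument on the $M$-orbit of the standard lattice $\Z^n$ sitting inside it. First I would put $L=\Z^n\subseteq\Q^n$ and form
\[
\Lambda \;=\; L+ML+M^2L+\cdots\;=\;\sum_{j\ge 0}M^jL .
\]
Because the characteristic polynomial of $M$ is monic with integer coefficients, Cayley--Hamilton expresses $M^n$ as a $\Z$-linear combination of $I,M,\dots,M^{n-1}$, and by induction so is every $M^j$ with $j\ge 0$. Hence $\Lambda=L+ML+\cdots+M^{n-1}L$ is a finitely generated $\Z$-module; being torsion-free and containing the full-rank lattice $L$, it is itself a lattice of rank $n$, and $m:=[\Lambda:L]$ is finite.

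Next I would observe that $M\Lambda=\sum_{j\ge 1}M^jL\subseteq\Lambda$, so (fixing a $\Z$-basis of $\Lambda$) the map $M$ is given by an integer matrix of determinant $\det M=\pm 1$; therefore $M\Lambda=\Lambda$. Thus $M$ restricts to an automorphism of $\Lambda$, and so does $M^j$ for every $j\in\Z$.

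Now comes the pigeonhole step. For each $j\ge 0$ the automorphism $M^j$ of $\Lambda$ carries $L$ to $M^jL$ and preserves index, so $[\Lambda:M^jL]=[\Lambda:L]=m$, and consequently $m\Lambda\subseteq M^jL\subseteq\Lambda$. Since the finite group $\Lambda/m\Lambda$ has only finitely many subgroups, the sublattices $M^jL$ (all containing $m\Lambda$) take only finitely many values, so $M^aL=M^bL$ for some $0\le a<b$. Applying the automorphism $M^{-a}$ of $\Lambda$ gives $L=M^{\,b-a}L$; in particular $M^{\,b-a}(\Z^n)=\Z^n$, so $M^{\,b-a}$ is an integral matrix (with integral inverse), which is the claim.

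The point I expect to need the most care with — and the reason this is not a one-liner — is that $M$ need not preserve $\Z^n$ itself; it only preserves the enlarged lattice $\Lambda$, so one cannot study the action of $M$ on $\Lambda/\Z^n$ directly. The device of passing to $\Lambda$, where the hypothesis $\det M=\pm1$ upgrades $M$ to an honest lattice automorphism, and then following the orbit of the \emph{sublattice} $\Z^n$ through the finitely many subgroups of $\Lambda/m\Lambda$, is what makes the argument go through; everything else is bookkeeping with Cayley--Hamilton and lattice indices.
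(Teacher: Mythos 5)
Your argument is correct, and it takes a genuinely different route from the paper. You build the $M$-invariant lattice $\Lambda=\sum_{j\ge 0}M^j\Z^n$ (finitely generated by Cayley--Hamilton, since the characteristic polynomial is monic integral), use $\det M=\pm1$ to upgrade $M$ to an automorphism of $\Lambda$, and then pigeonhole the orbit $\{M^j\Z^n\}$ among the finitely many subgroups squeezed between $m\Lambda$ and $\Lambda$, where $m=[\Lambda:\Z^n]$; the equality $M^aL=M^bL$ then yields $M^{b-a}\Z^n=\Z^n$, i.e.\ $M^{b-a}$ integral. The paper instead passes to the rational canonical form $\Omega$ of $M$ (integral, which implicitly uses Gauss's lemma for the invariant factors), clears denominators in the conjugating matrix to get $PP'=cdI$, and uses finiteness of $SL(n;\Z/cd\Z)$ to find $k$ with $\Omega^k=I+cdB$, transporting back to get $M^k=I+PBP'$. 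Both are finiteness arguments, but yours avoids the canonical form and the reduction to $\det M=1$ entirely and works intrinsically with lattices, while the paper's version produces a slightly more explicit conclusion (a congruence $M^k\equiv I$ modulo the denominator data) rather than bare existence of an integral power. All the delicate steps in your write-up check out: $[\Lambda:M^j L]=[\Lambda:L]$ because $M^j$ is an automorphism of $\Lambda$ carrying $L$ onto $M^jL$, and $m\Lambda\subseteq M^jL$ by Lagrange applied to $\Lambda/M^jL$.
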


\begin{proof} 
Since we can replace $M$ with $M^2$ then without loss of generality we can assume $\det(M) = 1$. Let $\Omega$ be the rational canonical form of $M$, which by assumption has integer entries. Thus, there is a nonsingular rational matrix $A$ such that $A^{-1}MA = \Omega$. Now $A = \frac{1}{c}P$ and $A^{-1} = \frac{1}{d}P'$ where
	\[PP' = cdI\]
$c,d$ are nonzero integers and $P,P'$ are integer matrices. Let $\overline{\Omega}$ be the matrix obtained from $\Omega$ by reducing its entries modulo $cd$. Since $\det(\Omega) = 1$ then $\det(\overline{\Omega}) = 1$, so  $\overline{\Omega} \in SL(n;\Z/cd\Z)$. Since $SL(n;\Z/cd\Z)$ is a finite group there is a positive integer $k$ such that
	\[\Omega^{\hspace{.01in}k} \equiv I \mod cd\]
that is, there an integer matrix $B$ such that $\Omega^k = I + cdB$. Now
\begin{align*}
		M^k &= A\Omega^kA^{-1}\\
				&= A(I + cdB)A^{-1}\\
				&= I + cdABA^{-1}\\
				&= I + PBP'
	\end{align*}
where $PBP'$ is an integer matrix, and thus $M^k$ is an integer matrix.
\end{proof}

Since $\mathcal{M}$ is a rational matrix with determinant $1$ and integer characteristic polynomial, then by the above proposition there is a $k$ such that $\mathcal{M}^k$ is an integer matrix. This means that $\mathcal{Q}_k$ is a symmetric integer matrix having $\lambda^k + \lambda^{-k}$ as an eigenvalue. We have now shown that every Salem number has a power $k$ such that $\lambda^k + \lambda^{-k}$ is an eigenvalue of a symmetric integral matrix. Now we want to show that we can raise $k$ high enough to get a positive matrix.

\vspace{1em}
$\mathcal{Q}_k$ is symmetric so we know there is an orthonormal basis of eigenvectors of $\mathcal{Q}_k$. Since there is a positive eigenvector $\mathbf{v}$ corresponding to $\lambda^{k} + \lambda^{-k}$ then for any standard basis vector $\mathbf{e}_i$
	\[\mathbf{e_i} = c_i\mathbf{v} + \mathbf{w}_i\]
where $\mathbf{w}_i$ lies in the orthogonal complement of Span$\lb\mathbf{v}\rb$, which is spanned by the other eigenvectors of $\mathcal{Q}_k$. Thus, for each $i$ we have $c_i = \mathbf{v}\cdot \mathbf{e_i} > 0$. Applying $\mathcal{Q}_k$ to both sides gives
\[\mathcal{Q}_k\mathbf{e}_i = c_i(\lambda^k + \lambda^{-k})\mathbf{e}_i + \mathcal{Q}_k\mathbf{w}_i\]
since $\lambda^k$ is a Salem number, the conjugates of $\lambda^k + \lambda^{-k}$ are real numbers in the interval $(-2,2)$. As we see above the vectors making up $\mathbf{w}_i$ are each scaled by a number in the interval $[-2,2]$, but $\lambda^k + \lambda^{-k}$ grows without bound, so eventually $\mathcal{Q}_k\mathbf{e}_i$ is a positive vector for any $i$.

\vspace{1em}
So, if we start with an integer $k$ such that $\mathcal{M}^k$ is integral, then $\mathcal{M}^{nk}$ is integral for any positive integer $n$, so choose $n$ large enough so that $\mathcal{Q}_{nk}$ is a positive symmetric integer matrix. Therefore, we have shown that for any Salem number $\lambda$ there is a positive integer $k$ such that $\lambda^k + \lambda^{-k}$ is an eigenvalue of a positive symmetric integral matrix.

\vspace{1em}
With this we can finish the proof as follows: Since $\mathcal{Q}_k$ is a nonsingular positive symmetric integer matrix having $\lambda^k + \lambda^{-k}$ as an eigenvalue with positive eigenvector $\mathbf{v}$, we can use Theorem $3.1$ to find a surface with tight, filling multicurves $C$ and $D$ whose intersection matrix is $\mathcal{Q}_k$. By Proposition $3.2$, the genus of this surface is $g = (n + e)^2 - (n + e) + 1$, where $n = [\Q(\lambda):\Q]$. Choosing $M = N = I$ and following discussion proceeding observation $2$, we see that $T_CT_D$ is a pseudo-Anosov map coming from Thurston's construction with stretch factor $\lambda^{2k}$.

\vspace{1em}
\textbf{Brief Summary.} Let $\lambda$ be a Salem number. Using Estes we find a rational symmetric matrix $Q$ having $\lambda + \lambda^{-1}$, its conjugates, and $1$ as eigenvalues. Without loss of generality we can assume that $Q$ has a positive eigenvector $\mathbf{v}$ corresponding to $\lambda + \lambda^{-1}$ since we can always conjugate $Q$ by an $SO(n;\Q)$ matrix to rotate an eigenvector of $\lambda + \lambda^{-1}$ into the first orthant. We define the matrix $\mathcal{M} = \begin{bmatrix} Q & -I\\ I & \mathbf{0}\end{bmatrix}$ which has $\lambda$, its conjugates, and the $6$th roots of unity $\mu$ and $\mu^{-1}$ as eigenvalues. Thus, $\mathcal{M}$ has determinant $1$, integer characteristic polynomial, $\mathcal{M}^{-1}$ exists and has the skew-property where $\mathcal{M}^k + \mathcal{M}^{-k}$ is a block diagonal matrix where the $(1,1)$ and $(2,2)$ blocks are equal; call these blocks $\mathcal{Q}_k$.

\vspace{1em}
$\mathcal{Q}_k$ has $\lambda^k + \lambda^{-k}$ as an eigenvalue, and all other eigenvalues lie in the interval $[-2,2]$ and by above arguments we can power up $\mathcal{M}$ so that $\mathcal{Q}_k$ is positive and integral. Using Theorem $3.1$, we know there is a surface with two tight filling multicurves $C$ and $D$ having $\mathcal{Q}_k$ as their intersection matrix. Applying Thurston's construction with $M = N = I$ we get a pseudo-Anosov map $T_CT_D$ having $\lambda^{2k}$ as its stretch factor. 

\section{Powers of Rational Symmetric Matrices}

Over the next two sections we will establish some final results, develop some background algebraic number theory and prove Theorem B. We start this section by proving a condition for when a real symmetric matrix will have a positive power and then conclude this section by showing that any rational symmetric matrix with a dominating eigenvalue larger than $1$ is conjugate to a rational symmetric matrix that has a positive power.

\begin{prop}
If $Q$ is a real symmetric matrix with a unique dominating eigenvalue $\lambda > 1$ and a positive eigenvector $\mathbf{v}$, then there is some power of $Q$ that is positive.
\end{prop}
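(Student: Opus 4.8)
The plan is to invoke the spectral theorem and exploit the strict dominance of $\lambda$, essentially reproducing — for a fixed symmetric matrix rather than for the family $\mathcal{Q}_k$ — the column-by-column argument that already appears in the proof of Theorem A. Since $Q$ is real symmetric it has an orthonormal basis of eigenvectors; normalize so that $\|\mathbf{v}\| = 1$ and let $\mathbf{v} = \mathbf{v}_1, \mathbf{v}_2, \ldots, \mathbf{v}_N$ be such a basis, with $Q\mathbf{v}_r = \lambda_r \mathbf{v}_r$ and $\lambda_1 = \lambda$. Reading ``unique dominating eigenvalue'' in the usual way — $\lambda$ is a simple eigenvalue and $\lambda > |\lambda_r|$ for all $r \geq 2$ — set $\rho := \max_{r \geq 2}|\lambda_r| < \lambda$.

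Next I would expand each standard basis vector in this eigenbasis: write $\mathbf{e}_j = c_j\mathbf{v} + \mathbf{w}_j$, where $\mathbf{w}_j = \sum_{r\geq 2}(\mathbf{e}_j\cdot\mathbf{v}_r)\mathbf{v}_r$ lies in $\mathrm{Span}\{\mathbf{v}_2,\ldots,\mathbf{v}_N\}$ and $c_j = \mathbf{e}_j\cdot\mathbf{v} = v_j > 0$ because $\mathbf{v}$ is a positive vector. Applying $Q^k$ gives
\[
Q^k\mathbf{e}_j = \lambda^k c_j\mathbf{v} + Q^k\mathbf{w}_j, \qquad \|Q^k\mathbf{w}_j\| \leq \rho^k\|\mathbf{w}_j\| \leq \rho^k,
\]
so $\lambda^{-k}Q^k\mathbf{e}_j \to c_j\mathbf{v}$ as $k\to\infty$, with error bounded by $(\rho/\lambda)^k$. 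The limit is a vector all of whose entries are positive, bounded below by $(\min_i v_i)c_j > 0$. Hence for each $j$ there is a threshold $k_j$ past which every entry of $\lambda^{-k}Q^k\mathbf{e}_j$ — equivalently, since $\lambda > 0$, every entry of the $j$-th column of $Q^k$ — is positive; taking $k \geq \max_j k_j$, a finite maximum over the $N$ columns, makes $Q^k$ a positive matrix.

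The step that genuinely requires the hypothesis, and the one I would be most careful about, is the simplicity of $\lambda$: were the $\lambda$-eigenspace higher-dimensional, $\mathbf{w}_j$ could carry a component also scaled by $\lambda^k$ under $Q^k$, and that component need not be entrywise positive, so the comparison would fail. One clean way to package this (and to make the role of the hypotheses transparent) is to note that $\lambda^{-k}Q^k \to \mathbf{v}\mathbf{v}^{T}$, the orthogonal projection onto $\R\mathbf{v}$, whose $(i,j)$ entry $v_iv_j$ is positive precisely because $\mathbf{v}$ is a positive vector; uniform convergence of the entries then gives $Q^k > 0$ for all large $k$. The assumption $\lambda > 1$ is not used beyond guaranteeing $\lambda \neq 0$ — it could be dropped — but it matches the situation in which the proposition will be applied in the next section.
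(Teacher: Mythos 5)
Your proof is correct and follows essentially the same route as the paper's own (sketched) argument: expand each standard basis vector as $c_j\mathbf{v} + \mathbf{w}_j$ with $c_j > 0$, apply $Q^k$, and use strict dominance of $\lambda$ to conclude each column is eventually positive. You merely supply the quantitative details (the bound $\|Q^k\mathbf{w}_j\| \leq \rho^k$ and the limit $\lambda^{-k}Q^k \to \mathbf{v}\mathbf{v}^T$) that the paper's sketch leaves implicit.
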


\begin{proof}[Sketch of Proof.] The proof of this is very similar to the proof that $\mathcal{Q}_k$ is eventually positive. Every standard basis vector can be written as
\[\mathbf{e}_i = c_i\mathbf{v} + \mathbf{w}_i\]
 with $c_i > 0$ for each $i$, and applying $Q^k$ to both sides gives
\[Q^k\mathbf{e}_i = c_i\lambda^k\mathbf{v} + Q^k\mathbf{w}_i\]
Since $\lambda > \mu$ for all other eigenvalues $\mu$ then the sum $c_i\lambda^k\mathbf{v} + Q^k\mathbf{w}_i$ is eventually a positive vector. Hence, $Q^k\mathbf{e}_i$ is eventually positive for all $i$. So there is a $k$ such that $Q^k$ is a positive matrix.
\end{proof}

\begin{prop}
If $Q \in M_n(\Q)$ is symmetric, and has a unique dominating eigenvalue $\lambda > 1$ with corresponding eigenvector $\mathbf{v}$. Then there is a matrix $U \in SO(n;\Q)$ such that $UA^kU^{T}$ is a positive symmetric rational matrix for some positive integer $k$.
\end{prop}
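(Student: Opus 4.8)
The plan is to conjugate $Q$ by a rational rotation that carries its dominating eigenvector into the open positive orthant, and then apply Proposition 6.1. The key point is that conjugation by an orthogonal matrix preserves symmetry and the full list of eigenvalues, while conjugation by a \emph{rational} orthogonal matrix also preserves rationality; combined with the density statement of Proposition 5.2, this lets us arrange a positive eigenvector at the cost of only a rational, symmetry-preserving change of basis.

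First I would record the elementary observations. For any $U \in SO(n;\Q)$ the matrix $UQU^T$ is again rational (a product of rational matrices), symmetric (since $U^T = U^{-1}$, so $(UQU^T)^T = UQ^TU^T = UQU^T$), and similar to $Q$, hence has the same unique dominating eigenvalue $\lambda > 1$, now with eigenvector $U\mathbf{v}$. Because $Q$ is real symmetric and $\lambda$ is real, we may take $\mathbf{v}$ to be a real vector. Thus it suffices to produce $U \in SO(n;\Q)$ for which $U\mathbf{v}$ is a strictly positive vector: then Proposition 6.1 applies to $UQU^T$ and yields a positive integer $k$ with $(UQU^T)^k = UQ^kU^T$ positive, which is exactly the desired conclusion (with $A = Q$).

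To find such a $U$, first choose a real rotation $U_0 \in SO(n)$ with $U_0\mathbf{v} = \|\mathbf{v}\|\,(1,\dots,1)/\sqrt{n}$, a vector in the open positive orthant. The map $SO(n) \to \R^n$, $U \mapsto U\mathbf{v}$, is continuous, so the set $\mathcal{U} = \{\, U \in SO(n) : U\mathbf{v} \text{ has every coordinate } > 0 \,\}$ is an open neighborhood of $U_0$ in $SO(n)$. By Proposition 5.2, $SO(n;\Q)$ is dense in $SO(n)$, so $\mathcal{U}$ contains a rational rotation $U$; for this $U$ the vector $U\mathbf{v}$ is strictly positive, and the previous paragraph finishes the argument.

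The step requiring the most care — and really the only place anything could go wrong — is the last one: density of $SO(n;\Q)$ gives a rational matrix merely \emph{close} to $U_0$, and the approximation must be good enough that the image of $\mathbf{v}$ remains strictly inside the positive orthant. This is precisely why it matters that strict positivity is an \emph{open} condition (had we only demanded nonnegativity, a small perturbation could push a coordinate across $0$), and why $U_0$ should send $\mathbf{v}$ to an \emph{interior} point of the orthant rather than to a boundary point such as a coordinate axis. Everything else is formal.
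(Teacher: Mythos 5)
Your argument is correct and is essentially the paper's own proof: conjugate $Q$ by a rational rotation (obtained via the density of $SO(n;\Q)$ in $SO(n)$, Proposition 5.2) so that the dominating eigenvector becomes strictly positive, then invoke Proposition 6.1. Your write-up merely makes explicit the openness/interior-point detail that the paper leaves implicit; the approach is the same.
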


\begin{proof} By Proposition $5.2$ $SO(n;\Q)$ is dense in $SO(n)$, so we can conjugate $Q$ by an $SO(n,\Q)$ matrix $U$ so that $\lambda$ now has a positive eigenvector. By Proposition $6.1$ we know there is some $k$ such that $UA^kU^{T}$ is a positive symmetric rational matrix.
\end{proof}

\section{Proof of Theorem B}

Before getting to the proof of Theorem B we will develop some algebraic number theory that we will use to prove the following:

\begin{thm} 
Given a totally real number field $K$, there is an algebraic unit $\alpha \in K$ such that $\alpha > 1$, $K = \Q(\alpha)$, $K = \Q(\alpha^m)$ for all positive integers $m$, and all conjugates of $\alpha$ are positive, less than 1.
\end{thm}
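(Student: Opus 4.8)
Write $n=[K:\Q]$ and let $\sigma_1,\dots,\sigma_n\colon K\to\R$ be the real embeddings (all embeddings of $K$ are real, since $K$ is totally real). The plan is to extract $\alpha$ from Dirichlet's unit theorem by a genericity argument in the logarithmic embedding, and then to square it to make it totally positive. Recall that $\mathcal{O}_K^{\times}$ has rank $n-1$ and that $\ell(u)=(\log|\sigma_1(u)|,\dots,\log|\sigma_n(u)|)$ carries $\mathcal{O}_K^{\times}$ onto a full-rank lattice $\Lambda$ in the hyperplane $H=\{x\in\R^n:x_1+\dots+x_n=0\}$. Inside $H$ consider the open convex cone $\mathcal{C}=\{x\in H: x_1>0>x_2,\dots,x_n\}$, which is nonempty (it contains $(n-1,-1,\dots,-1)$) and of full dimension $n-1$. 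A unit $u$ with $\ell(u)\in\mathcal{C}$ has $|\sigma_1(u)|>1$ and $|\sigma_j(u)|<1$ for $j\ge 2$, so $u^2$ has the archimedean behaviour we want; the remaining work is to arrange that $u$ and all of its powers generate $K$.

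For that, enumerate the finitely many subfields $L$ with $\Q\subseteq L\subsetneq K$. Each such $L$ is itself totally real of degree $[L:\Q]\le n-1$, and restricting $\sigma_1,\dots,\sigma_n$ to $L$ realizes each embedding of $L$ with equal multiplicity; hence $\ell(\mathcal{O}_L^{\times})$ spans an $\R$-subspace $V_L\subseteq H$ of dimension $[L:\Q]-1\le n-2$, which is therefore a \emph{proper} subspace of $H$. The key point is that if $v\in\Lambda$ satisfies $v\notin\bigcup_L V_L$ and $u\in\mathcal{O}_K^{\times}$ has $\ell(u)=v$, then $\Q(u^m)=K$ for every positive integer $m$: otherwise $\Q(u^m)=L$ for some proper $L$, and then $u^m$ is an algebraic integer of $L$ with an algebraic-integer inverse, so $u^m\in\mathcal{O}_L^{\times}$ and $mv=\ell(u^m)\in V_L$, forcing $v\in V_L$ because $V_L$ is a subspace -- a contradiction. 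Notice this makes the ``for all $m$'' clause of the theorem cost nothing extra, precisely because each $V_L$ is invariant under scaling.

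It then remains to find a point of $\Lambda$ lying in $\mathcal{C}$ but outside $\bigcup_L V_L$. Since $\mathcal{C}$ is an open cone it contains a ball $B(x_0,\varepsilon)$ and hence all dilates $B(tx_0,t\varepsilon)$, $t>0$; as $\Lambda$ has full rank in $H\cong\R^{n-1}$, the number of points of $\Lambda$ in $B(tx_0,t\varepsilon)$ grows like a positive constant times $t^{n-1}$, whereas for each $L$ the number of those points lying on $V_L$ is $O(t^{n-2})$ since $\dim V_L\le n-2$. For $t$ large there is therefore $v\in\Lambda\cap\mathcal{C}$ avoiding the finitely many $V_L$. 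Choosing $u\in\mathcal{O}_K^{\times}$ with $\ell(u)=v$ and setting $\alpha=u^2$, we get a unit with $\sigma_1(\alpha)=\sigma_1(u)^2>1$ and $\sigma_j(\alpha)=\sigma_j(u)^2\in(0,1)$ for $j\ge2$, and $\Q(\alpha^m)=\Q(u^{2m})=K$ for all $m\ge1$ by the previous paragraph. In particular $\Q(\alpha)=K$, so the minimal polynomial of $\alpha$ has degree $n$ and its roots are exactly the (necessarily distinct) numbers $\sigma_1(\alpha),\dots,\sigma_n(\alpha)$; hence $\alpha>1$ and every other conjugate of $\alpha$ is positive and $<1$, as required.

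I expect the main obstacle to be the last paragraph: verifying that an open, full-dimensional cone in $H$ must contain a point of the unit lattice $\Lambda$ that dodges a prescribed finite union of proper subspaces. This is where one genuinely uses that $\mathcal{C}$ is a \emph{cone} -- so that its dilates eventually contain balls large compared with the covering radius of $\Lambda$ -- together with the crude count comparing the $t^{n-1}$ growth of lattice points inside the cone against the $O(t^{n-2})$ growth on each forbidden subspace. The rest -- the subspace structure of $\ell(\mathcal{O}_L^{\times})$, the reduction of the ``all powers'' requirement to scaling-invariance of the $V_L$, and the concluding bookkeeping about the conjugates of $\alpha$ -- is routine once this genericity step is secured.
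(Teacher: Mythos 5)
Your proposal is correct, and it shares the paper's overall skeleton: Dirichlet's unit theorem, the logarithmic embedding of the unit group as a full lattice in the trace-zero hyperplane $H$, a unit $u$ with $|u|>1$ and all other archimedean absolute values less than $1$, and finally $\alpha=u^2$ to force total positivity. Where you genuinely diverge is in how you guarantee that $u$ and all its powers generate $K$. The paper does this in two separate steps: it chooses rational (then integral) coefficients with respect to the basis $\{\lambda(u_1),\dots,\lambda(u_{n-1})\}$ so that the log-vector has pairwise distinct entries, which forces $n$ distinct conjugates and hence $\Q(u)=K$; and it then proves the ``all powers'' statement by a product-of-conjugates (norm) argument, comparing $|\alpha\sigma_2(\alpha)\cdots\sigma_n(\alpha)|=1$ with the corresponding product for $\alpha^m$ over a hypothetical proper subfield. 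You instead observe that each proper subfield $L$ contributes a proper linear subspace $V_L=\mathrm{span}\,\ell(\mathcal{O}_L^\times)$ of $H$ of dimension $[L:\Q]-1\le n-2$, and that a lattice point of the open cone $\mathcal{C}$ avoiding the finite union $\bigcup_L V_L$ handles $m=1$ and all higher powers simultaneously, precisely because the $V_L$ are scaling-invariant; existence of such a point follows from your $t^{\,n-1}$ versus $t^{\,n-2}$ lattice-point count. Your route buys a cleaner, unified treatment of the ``for all $m$'' clause (no separate norm argument, no need to arrange distinct coordinates), at the cost of the counting lemma and the enumeration of subfields; the paper's route is more explicit and elementary, since working directly in the basis $\{\lambda(u_i)\}$ lets one simply write down suitable rational coefficients and clear denominators, with no genericity or counting needed. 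One shared caveat: both arguments tacitly assume $K\neq\Q$ (for $K=\Q$ there is no unit greater than $1$, and the cone in $H=\{0\}$ is empty), so neither proof covers that degenerate case.
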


First a couple definitions:

\begin{definition} 
Given a basis $\mathbf{e}_1,...,\mathbf{e}_n$ for $\R^n$, then the $\mathbf{e}_i$ form a basis for a free $\Z$-module $L$ of rank $n$, namely,
	\[L = \Z\mathbf{e}_1 \oplus ... \oplus \Z\mathbf{e}_n\]
A set $L$ constructed this way is called a \textit{lattice} in $\R^n$.
\end{definition}

\vspace{1em} It is a well known fact that every number field $K$ of degree $n$ over $\Q$ has exactly $n$ embeddings into $\C$. Since we are talking about totally real number fields, these embeddings will be into $\R$. Let $\sigma_1,...,\sigma_n$ be these embeddings, where $\sigma_1$ denotes the inclusion embedding.

\begin{definition} Let $K$ be a totally real number field of degree $n$. We define the \textit{logarithmic embedding} of $K$ into $\R^n$ by
	\[ \lambda(x) = (\log|\sigma_1(x)|,...,\log|\sigma_n(x)|)\]
for all nonzero $x \in K$. Note: that $\lambda(xy) = \lambda(x) + \lambda(y)$, so $\lambda$ is a homomorphism from the multiplicative group $K^*$ to the additive group $\R^n$.
\end{definition}

The logarithmic embedding is used to prove the Dirichlet Unit Theorem, which gives a complete description of the unit group of a number field. A proof for this theorem can be found in \cite{MR2779252}, but we will just state it:

\begin{thm}[Dirichlet Unit Theorem]
Let $K$ be a number field, $r_1$ is the number of real embbedings, and $r_2$ is the number of complex embeddings (up to conjugacy). Then the unit group $U$ of $K$ is isomorphic to $G \times \Z^{r_1 + r_2 - 1}$, where $G$ is a finite cyclic group consisting of all the roots of unity in $K$.
\end{thm}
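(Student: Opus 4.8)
The plan is to run the classical geometry-of-numbers proof, taking as the main tool the logarithmic embedding $\lambda$ from the definition just above, extended in the obvious way to an arbitrary number field: send $x \in K^\times$ to the vector whose coordinates are $\log|\sigma_i(x)|$ at the $r_1$ real places and $2\log|\sigma_j(x)|$ at the $r_2$ complex places, so that the coordinate sum of $\lambda(x)$ equals $\log|N_{K/\Q}(x)|$. Write $U$ for the unit group of $\mathcal{O}_K$ and consider $\lambda$ restricted to $U$. Since $N_{K/\Q}(u) = \pm 1$ for a unit, $\lambda(U)$ lies in the hyperplane $H = \{x : \sum_i x_i = 0\}$, which has dimension $r_1 + r_2 - 1$. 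The goal is to prove two things: (i) $\ker(\lambda|_U)$ is the finite cyclic group $G$ of roots of unity in $K$, and (ii) $\lambda(U)$ is a lattice of full rank $r_1 + r_2 - 1$ in $H$. Granting these, the sequence $1 \to G \to U \to \lambda(U) \to 0$ splits because $\lambda(U) \cong \Z^{r_1+r_2-1}$ is free abelian, yielding $U \cong G \times \Z^{r_1+r_2-1}$.

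Steps (i) and the discreteness half of (ii) I would dispatch together. The canonical (Minkowski) embedding realizes $\mathcal{O}_K$ as a full lattice in $\R^{r_1+2r_2}$, and the preimage under $\lambda$ of a bounded region of $H$ consists of units whose archimedean absolute values are all bounded, hence of lattice points in a bounded region; such a set is finite. This shows at once that $\lambda(U)$ is discrete in $H$ (so a lattice of some rank $\le r_1+r_2-1$) and that $\ker(\lambda|_U)$ is finite. A finite subgroup of $K^\times$ has all elements of finite order, hence consists of roots of unity and is cyclic, so $\ker(\lambda|_U) = G$.

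The substantive point is full rank in (ii), and this is where Minkowski's lattice-point theorem is needed. For each index $k$ I would produce a unit $u_k$ with $|\sigma_i(u_k)| < 1$ for all $i \ne k$ --- so necessarily $|\sigma_k(u_k)| > 1$, since the norm is $\pm 1$. The mechanism: applying Minkowski's theorem to a suitable centrally symmetric box in Minkowski space shows there is a constant $C = C(K)$ so that for every nonzero $\alpha \in \mathcal{O}_K$ there is a nonzero $\beta \in \mathcal{O}_K$ with $|N_{K/\Q}(\beta)| \le C$ and $|\sigma_i(\beta)| < |\sigma_i(\alpha)|$ for all $i \ne k$. Iterating yields elements of norm at most $C$ whose non-$k$ absolute values strictly decrease; since only finitely many ideals have norm $\le C$, two of the corresponding principal ideals coincide, and the ratio of their generators is the wanted unit $u_k$. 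Finally, the $(r_1+r_2)\times(r_1+r_2)$ matrix with rows $\lambda(u_1),\dots,\lambda(u_{r_1+r_2})$ has strictly negative off-diagonal entries, strictly positive diagonal entries and zero row sums; deleting one row and the corresponding column leaves a strictly diagonally dominant matrix, hence nonsingular, so the whole matrix has rank exactly $r_1+r_2-1$. Thus $\lambda(u_1),\dots,\lambda(u_{r_1+r_2})$ span $H$, and together with discreteness this makes $\lambda(U)$ a full lattice.

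I expect the geometry-of-numbers input to be the main obstacle: normalizing the Minkowski embedding with the right covolume and choosing the box edge-lengths so the volume threshold in Minkowski's theorem is exceeded while the norm of the produced $\beta$ stays bounded uniformly in $\alpha$ and $k$. Everything downstream --- the pigeonhole on ideals of bounded norm, the sign-pattern linear algebra, and the splitting of the exact sequence --- is routine. In the totally real case relevant here one has $r_2 = 0$ and $r_1 = n$, so $\lambda$ is literally the embedding defined above and the bookkeeping simplifies, but the shape of the argument is unchanged.
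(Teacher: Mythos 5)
The paper does not prove this theorem at all: it is quoted as a known result, with the proof deferred to the cited reference (\cite{MR2779252}), and only the consequence for totally real fields ($r_1=n$, $r_2=0$, $G=\lb -1,1\rb$, $\lambda(U)$ a full lattice in the trace-zero hyperplane $H$) is used later. Your sketch is the standard geometry-of-numbers proof found in such references, and its skeleton is sound: the logarithmic embedding lands $U$ in $H$; boundedness of all archimedean absolute values plus discreteness of the Minkowski lattice gives both discreteness of $\lambda(U)$ and finiteness (hence cyclicity, hence equality with the roots of unity) of the kernel; the Minkowski-box-plus-pigeonhole-on-ideals-of-bounded-norm argument produces, for each place $k$, a unit large at $k$ and small elsewhere; and the zero-row-sum/sign-pattern matrix is rank $r_1+r_2-1$ by strict diagonal dominance of a principal minor, so $\lambda(U)$ is a full lattice and the extension $1\to G\to U\to\lambda(U)\to 0$ splits since the image is free. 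So there is no gap in the approach to flag relative to the paper; the only caveat is that your write-up is explicitly a plan, leaving the covolume normalization and box-size bookkeeping in the Minkowski step to be carried out, exactly as you acknowledge.
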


Since we are considering number fields $K$ that are totally real then if $[K: \Q] = n$ we have $r_1 = n$, $r_2 = 0$ and $G = \lb -1, 1\rb$. Hence, the unit group of $K$ is isomorphic to $\lb -1, 1 \rb \times \Z^{n-1}$. That is, there are units $u_1,...,u_{n-1}$ such that every unit of $K$ is of the form
 \[\pm u_{1}^{m_1}\cdots u_{n-1}^{m_{n-1}}\]
where $m_i$ are integers.

\vspace{1em}
The logarithmic embedding maps the unit group $U$ of $K$ to the hyperplane 
\[ H = \lb (x_1,...,x_n) \ \left\vert \ \displaystyle\sum_{i=1}^{n}{x_i} = 0\rb\right. .\]
In fact, if $u_1,...,u_{n-1}$ are the generators for $U$ then $\lb \lambda(u_1),...,\lambda(u_{n-1})\rb$ is a basis for $H$, hence $\lambda(U)$ is a lattice in $H$.

\begin{proof}[Proof of Theorem $7.1$.] \textit{Step} 1: We start by proving the following claim:

\vspace{1em}
\textit{Claim}: Suppose $\alpha$ is a unit that generates the field, which is bigger than 1, and all its Galois conjugates are less than $1$ in absolute value. Then $K = \Q(\alpha^m)$ for any positive integer $m$.

\begin{proof} Let $\lb \sigma_2(\alpha),...,\sigma_{n}(\alpha) \rb$ be the Galois conjugates of $\alpha$. If there is a positive integer $m$ such that $\Q(\alpha^m)$ is a proper subfield of $\Q(\alpha)$, then $\alpha^m$ has the following Galois conjugates (reordering if necessary) $\lb \sigma_2(\alpha)^m,...,\sigma_k(\alpha)^m\rb$, where $k < n$. Since $\alpha$ and $\alpha^m$ are algebraic units we have that
	\[	|\alpha\sigma_2(\alpha)\cdots\sigma_{n}(\alpha)| = 1\]
and
	\[		|\alpha^m\sigma_2(\alpha)^m\cdots\sigma_{k}(\alpha)^m| = 1.\]
This tells us that
\[   |\alpha\sigma_2(\alpha)\cdots\sigma_{k}(\alpha)| = 1\]
therefore
\[   |\sigma_{k+1}(\alpha)\cdots\sigma_{n}(\alpha)| = 1\]
which is impossible since $|\sigma_i(\alpha)| < 1$ for $i = 2,...,n$. Therefore, $K = \Q(\alpha^m)$ for any positive integer $m$.
\end{proof}

\textit{Step} 2: Now we will find such a unit. Let $u_1,...,u_{n-1}$ be positive generators for the unit group of $K$. Since $\lb \lambda(u_1),..., \lambda(u_{n-1})\rb$ is a basis for $H$ then we can take rational numbers $a_1,...,a_{n-1}$ such that
	\begin{align*}
				&(1) \ \displaystyle\sum_{i=1}^{n-1}a_i\log|u_i| > 1\\
				&(2) \ \displaystyle\sum_{i=1}^{n-1}a_i\log|\sigma_j(u_i)| < 0, \ 2\leq j \leq n - 1\\
				&(3) \ \text{The entries of } a_1\lambda(u_1)+...+a_{n-1}\lambda(u_{n-1}) \text{ sum to 0}\\
				&(4) \ \text{All entries of } a_1\lambda(u_1)+...+a_{n-1}\lambda(u_{n-1}) \text{ are distinct.}
		\end{align*}
Clearing denominators gives us an integer combination
	\[b_1\lambda(u_1)+...+b_{n-1}\lambda(u_{n-1})\]
that also has the above properties. Now take $u \in U$ to be the element
	\[ u = u_1^{b_1}\cdots u_{n-1}^{b_{n-1}}\]

By construction no two entries of $\lambda(u)$ are identical, hence $\sigma_i(u) \neq \sigma_j(u)$ for $i \neq j$, so $u$ has $n$ distinct Galois conjugates, therefore the minimal polynomial of $u$ over $\Q$ has degree $n$. Hence, $K = \Q(u)$. Also, by construction, $\log|u| > 1$ so $|u| > 1$ but $|\sigma_i(u)| < 1$ for all $2 \leq i \leq n$, so by step $1$ any power of $u$ generates $K$. Let $\alpha = u^2$, then $\alpha > 1$, all its conjugates are positive less than $1$, $K = \Q(\alpha)$ and $K = \Q(\alpha^m)$ for all positive integers $m$. 
\end{proof}

We now have all the pieces to prove the following:

\begin{lem} Let $K$ be a totally real number field. Then there is an algebraic unit $\alpha$ such that $K = \Q(\alpha)$ and $\alpha$ is the dominant eigenvalue of a positive symmetric integral matrix that is the intersection matrix of a pair of tight, filling multicurves on some orientable closed surface.
\end{lem}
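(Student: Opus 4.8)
The plan is to feed the output of Theorem $7.1$ through the matrix machinery of Sections $5$ and $6$ and then into Theorem $3.1$. First I would apply Theorem $7.1$ to the given totally real field $K$, with $n = [K:\Q]$, to obtain an algebraic unit $\alpha$ with $\alpha > 1$, $K = \Q(\alpha) = \Q(\alpha^m)$ for every positive integer $m$, and every Galois conjugate of $\alpha$ positive and strictly less than $1$. In particular $\alpha$ is a totally real algebraic integer of degree $n$, so Estes' theorem (Theorem $5.1$) produces a rational symmetric matrix $Q$ of size $n+e$, with $e \in \lb 1,2\rb$, whose characteristic polynomial is $f(x)(x-1)^e$, where $f$ is the minimal polynomial of $\alpha$. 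Because $\alpha > 1$ while every other root of $f(x)(x-1)^e$ — the conjugates of $\alpha$, which lie in $(0,1)$, together with the root $1$ — has absolute value at most $1$, the eigenvalue $\alpha$ is the unique dominant eigenvalue of $Q$. Moreover $\alpha$ is a unit, so $f(0) = \pm 1$ and hence $\det Q = \pm f(0)\cdot 1^e = \pm 1$, and the characteristic polynomial of $Q$ is integral.

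Next I would upgrade $Q$ to a positive integral matrix without disturbing the eigenvalue structure. Applying Proposition $6.2$ to $Q$ gives a matrix $U \in SO(n+e;\Q)$ and a positive integer $k_1$ such that $\widehat{Q} := U Q^{k_1} U^{T}$ is a positive symmetric \emph{rational} matrix; conjugation and taking powers leave the determinant equal to $\pm 1$ and the characteristic polynomial integral (its roots are the algebraic integers $\sigma_i(\alpha)^{k_1}$ and $1$), and $\widehat{Q}$ has unique dominant eigenvalue $\alpha^{k_1}$. Then Proposition $5.5$ supplies a positive integer $k_2$ with $\widehat{Q}^{\,k_2}$ integral, and since a power of a positive matrix is again positive, $\widehat{Q}^{\,k_2}$ is a positive symmetric integral matrix; it is nonsingular because $\det\widehat{Q}^{\,k_2} = \pm 1$, and its unique dominant eigenvalue is $\alpha^{k_1 k_2}$.

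Finally, set $\beta = \alpha^{k_1 k_2}$. By Theorem $7.1$, $\beta$ is a unit with $K = \Q(\beta)$, and its conjugates are the numbers $\sigma_i(\alpha)^{k_1 k_2} \in (0,1)$, so $\beta$ is the dominant eigenvalue of the nonsingular positive symmetric integral matrix $\widehat{Q}^{\,k_2}$. Applying Theorem $3.1$ to $\widehat{Q}^{\,k_2}$ produces a closed orientable surface $S$ carrying tight, filling multicurves $C = \lb C_1,\dots,C_N\rb$ and $D = \lb D_1,\dots,D_N\rb$ whose intersection matrix is exactly $\widehat{Q}^{\,k_2}$; renaming $\beta$ as $\alpha$ yields the statement. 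I expect the main obstacle to be the tension between the rational setting in which Estes' theorem and Propositions $5.2$, $6.1$, $6.2$ operate and the integrality required by Theorem $3.1$. The point that resolves it is that $\alpha$ being a unit forces $\det Q = \pm 1$ together with an integral characteristic polynomial, so Proposition $5.5$ applies, and that passing to a single common exponent $k_1 k_2$ simultaneously preserves positivity and achieves integrality — a positive power of a positive matrix stays positive, and an integral power of an integral matrix stays integral — so the two requirements do not conflict.
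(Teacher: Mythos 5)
Your proposal is correct and follows essentially the same route as the paper: Theorem $7.1$ to get the unit, Estes (Theorem $5.1$) for a rational symmetric matrix with unit determinant and integral characteristic polynomial, Proposition $6.2$ to obtain a positive rational symmetric power via $SO(n+e;\Q)$ conjugation, Proposition $5.5$ to pass to an integral power, and finally Theorem $3.1$ to realize the matrix as an intersection matrix, with $K = \Q(\alpha^{k_1k_2})$ guaranteed by Theorem $7.1$. The only (harmless) cosmetic difference is that you keep $\det = \pm 1$ and let Proposition $5.5$ absorb the sign, whereas the paper takes the exponent even to force determinant $1$.
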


\begin{proof} 
Let $K$ be a totally real number field, then by Theorem $7.1$ we can find an algebraic unit $\beta$ so that $\beta > 1$, $K=\Q(\beta^m)$ for all positive integers $m$, and all conjugates of $\beta$ are positive less than $1$. By Theorem $5.1$ (Estes) there is a rational symmetric matrix $B$ having $\beta$ as its unique dominating eigenvalue greater than 1, and whose characteristic polynomial is $f(x)(x - 1)^e$, where $f(x)$ is the minimal polynomial of $\beta$ over $\Q$. Note that $f(x)$ has integer coefficients and since $\beta$ is a unit then the constant term of $f(x)$ is 1, so $\det(B) = \pm1$.

\vspace{1em}
Now, by Proposition $6.2$ we can conjugate $B$ by an $SO(n + e; \Q)$ matrix $U$ where there is a positive integer $k$ such that $Q = UB^kU^{T}$ is a positive rational matrix, without loss of generality assume $k$ is even. Now, $Q$ is a positive rational matrix whose eigenvalues are $\beta^k$, its conjugates, and $1$, hence the coefficients of the characteristic polynomial of $Q$ are integers, and $\det(Q) = (\pm1)^k = 1$.

\vspace{1em}
Applying Proposition $5.5$, we know there is some positive integer $\ell$ so that $Q^{\ell}$ is integral and if we let $\alpha = \beta^{k\ell}$ then $ K = \Q(\alpha)$ where $\alpha$ is the dominating eigenvalue of $Q^{\ell}$, which is a positive symmetric integral matrix. Now apply Theorem $3.1$ to find a closed orientable surface with multicurves $C = \lb C_1,...,C_{(n + e)}\rb$ and $D = \lb D_1,...,D_{(n+e)}\rb$ such that $i(D_i,C_j) = Q_{ij}^{\ell}$. 
\end{proof}

We end the paper by proving Theorem B, which we restate here.

\vspace{1em}
\textbf{Theorem B.} \textit{Every totally real number field is of the form $K = \Q(\lambda + \lambda^{-1})$, where $\lambda$ is the stretch factor of a pseudo-Anosov map arising from Thurston's construction.}

\begin{proof} Let $K$ be a totally real number field. By Lemma $7.5$ we can find a unit $\alpha$ such that $K = \Q(\alpha)$ and $\alpha$ is the dominating eigenvalue of a positive symmetric integral matrix $Q$ that is the intersection matrix of a pair of tight, filling multicurves, $C$ and $D$, on a closed orientable surface $S$. Without loss of generality we can assume $\alpha > 2$. Applying Thurston's construction with $M = N = I$ we have $\alpha^2$ as the dominating eigenvalue of $MQNQ^T = Q^2$, and the following representations of $T_C$ and $T_D$:
	\[
				[T_C] = \begin{bmatrix} 1 & 1 \\ 0 & 1\end{bmatrix} \hspace{.1in} \text{and} \hspace{.1in} [T_D] = \begin{bmatrix} 1 & 0 \\ -\alpha^2 & 1\end{bmatrix}
	\]
Multiplying these matrices gives
	\[
			[T_C][T_D] = \begin{bmatrix} 1 - \alpha^2 & 1 \\ -\alpha^2 & 1\end{bmatrix}
	\]
Since $\alpha^2 > 4$ then we have that $|tr([T_C][T_D])| = |2 - \alpha^2| > 2$. So $T_CT_D$ is a pseudo-Anosov map with stretch factor
	\[
			\lambda = \df{(\alpha^2 - 2) + \alpha\sqrt{\alpha^2 - 4}}{2}
	\]
Hence, $\lambda + \lambda^{-1} = \alpha^2 - 2$, and so $\Q(\lambda + \lambda^{-1}) = \Q(\alpha^2 - 2) = \Q(\alpha^2) = K$. 
\end{proof}
\bibliographystyle{amsalpha}
\bibliography{mybib}{}
\end{document}